\documentclass[reprint,
onecolumn,
superscriptaddress,
nofootinbib,
aps,
pre,
showkeys,
]{revtex4-2}

\usepackage{array}
\usepackage{dcolumn}
\usepackage{amsmath,amssymb,amsthm,graphicx,comment,bm,mathtools}
\usepackage[normalem]{ulem}
\usepackage[dvipsnames]{xcolor}
\usepackage{url}
\usepackage{tikz}
\usepackage{hyperref}
\usepackage{todo}
\usepackage{multirow}

\def\be{\begin{equation}}
\def\ee{\end{equation}}
\def\ben{\begin{eqnarray}}
\def\een{\end{eqnarray}}

\def\op{\hat{\operatorname{P}}}

\newcommand{\sumnj}{\sum_{\substack{n=0 \\ n \ne {\ell_j}}}^k}
\newcommand{\sumnl}{\sum_{\substack{l=0 \\ l \ne {\ell_j}}}^k}
\newtheorem{theorem}{Theorem}
\newtheorem{definition}{Definition}

\newtheorem{property}{Property}

\newtheorem{remark}{Remark}[section]

\newcommand{\Spann}{{\mbox{\rm{Span}}}}
\def\oI{\hat{\operatorname{I}}}
\def\In{{\cal{I}}}

\def\N{\mathbb{N}}

\def\R{\mathbb{R}}

\begin{document}

\title{Recursive construction of biorthogonal polynomials for handling polynomial regression}

\author{Laura Rebollo-Neira} 
\email{l.rebollo-neira@aston.ac.uk}
\affiliation{Department of Applied Mathematics and Data Science, College of Engineering and Physical Sciences, Aston University, B4 7ET, Birmingham, UK}

\author{Jason Laurie}
\email{j.laurie@aston.ac.uk}
\affiliation{Department of Applied Mathematics and Data Science, College of Engineering and Physical Sciences, Aston University, B4 7ET, Birmingham, UK}

\date{\today}

\begin{abstract}
An adaptive procedure for constructing polynomials which are biorthogonal to the basis of monomials in the same finite-dimensional inner product space is proposed. By taking advantage of available orthogonal polynomials, the proposed methodology reduces the well-known instability problem arising from the matrix inversion involved in classical polynomial regression.  The recurrent generation of the biorthogonal basis facilitates the upgrading of all its members to include an additional one. Moreover, it allows for a natural downgrading  of the basis. This convenient feature leads to a straightforward approach for reducing the number of terms in the polynomial regression approximation. The merit of this approach is illustrated through a series of examples where the resulting biorthogonal basis is derived from Legendre, Laguerre, and Chebyshev orthogonal polynomials.
\end{abstract}

\keywords{Biorthogonal polynomials, polynomial regression, biorthogonal representation of orthogonal projections}

\maketitle

\section{Introduction}\label{sec:introduction}

\noindent Polynomial regression is becoming increasingly important for predictive modelling~\cite{Ost12,EO20, AVV22}, data analysis~\cite{MMR17}, signal and images processing~\cite{Im}, machine learning and neuron networks~\cite{SS22, SHL23,MBD22}. Thus, a renewed interest in efficient and effective techniques suitable for tackling common polynomial regression problems has arisen. Fundamentally, polynomial regression problems concern the modelling of a process represented by a function $f$ by  a polynomial $f_k$ of degree $k$, i.e., 
\begin{align*}
	f_k(x)= c_0 + c_1 x + \cdots + c_k x^k,\quad \text{for $x \in \In\subseteq \R$},
\end{align*}
and finding the coefficients $c_n$ for $n=0,\dots,k$ by minimising the $L^2$-norm of the approximation error $\| f - f_k\|_2$.  Many applications using polynomial regression, e.g. such as the method of moments used in Ref.~\cite{MMR17}, approach the problem through the traditional method of solving the least squares equation via the normal equations. However, such an approach requires the inversion of a Gram matrix of Hankel-type which are notorious for being very badly conditioned~\cite{ZT01}, and can only provide a reliable solution for low-order polynomial approximations. This is unfortunate, because the Weierstrass approximation theorem states that  every continuous function defined on a compact interval can be uniformly approximated  by a polynomial up to any desired precision, even if the degree of the polynomial may be high. Moreover, because monomials on a compact interval are among the simplest functions to evaluate, integrate, and differentiate,  polynomial approximations have enormous practical relevance in terms of both analytical and numerical computations of integrals and derivatives. This article introduces an adaptive biorthogonalisation technique which, by leveraging the orthogonality of classic orthogonal polynomials (OP), avoids the need for a  matrix inversion and provides the coefficients of the polynomial approximation by simple  computation of inner products. Additional advantages stem from the recursive nature of the approach that provides an efficient strategy for reducing the number of terms of the polynomial approximation i.e., it is convenient to iteratively increase or decrease the number of terms in the polynomial approximation to adjust the residual error. 

The general theory of biorthogonal polynomials sequences (BPS) is not as extensively developed as the theory of orthogonal polynomials sequences (OPS). The initial research on the topic was conducted by Szegi\"o~\cite{Sze82}, Iserles and Nørsett~\cite{IN88, IN89,IN95}, and Brezinski \cite{Bre92}. Since then, there have been few applications or further studies on BPS. One of the reasons for the extensive use of OP is the convenience of the orthogonality property for their derivation and applications. Another is the stability of the representation, which follows from the orthogonality property.  However, when pursuing the construction of $K$-term polynomial approximations, releasing the condition of orthogonality may render approximations of equal or better quality involving fewer terms. We believe that the limited current applicability of BPS might be partly due to the absence of convenient methodologies for their practical construction. Therefore, our objective is to propose new methodologies for building BPS and enhance the comprehension that may expand the range of potential applications across diverse domains.

Since several definitions of biorthogonal polynomials (BP) exist, let us specify the definition we refer to here: Two polynomial bases $\{p_n\}_{n=0}^k$ and $\{q_n\}_{n=0}^k$ are biorthogonal on an interval ${\cal{I}}\subseteq \R$  if the inner product
\begin{align*}
	\left\langle  p_n ,  q_m\right\rangle =\int_{\mathcal{I}}\, p_n(x)\, q_m(x) \, d\mu = \delta_{m,n},
\end{align*}
where $d\mu= w(x)\, dx$ is a measure with $w(x)$ an acceptable weight function, and $\delta_{m,n}$ is the Kronecker delta-function. More specifically, we refer to \textit{classic-like biorthogonal polynomials} to be a set of polynomials $\left\{\beta_n^{k}\right\}_{n=0}^k$  biorthogonal to the set of monomials $\{x^n\}_{n=0}^k$, with the additional property that $\Spann\left(\{x^n\}_{n=0}^k\right)= \Spann\left(\{\beta_n^{k}\}_{n=0}^k\right)$ for each $k\in \N$. Here, the superscript of $\beta_n^{k}$ indicates that all the elements $n=0,\dots,k$  must be re-calculated if the number $k$ of elements in the basis changes. This is a major difference with OP for if the degree $k$ is increased, say to $k+1$, only a single new independent element needs to be added to the original orthogonal set, without modification of the previously calculated elements. 

While recursive equations for generating biorthogonal functions were proposed in Refs.~\cite{RNL02,LRN02,LRN07}, the general nature of that framework misses the main advantage of dealing with the particular case of monomials, namely, the  wealth of available results concerning the construction of OP. The central contribution of this work is to produce an adaptive methodology for constructing classic-like BP giving rise to representations of orthogonal projections for the subspace spanned by the monomials. In particular, we illustrate the method by deriving BP based on orthogonal bases built from Legendre, Laguerre, and Chebyshev polynomials. 

The paper is organised as follows: Sec.~\ref{sec:preliminary} provides the necessary definitions and background material for the construction of classic-like BP. Sec.~\ref{sec:biorthogonal} proves the biorthogonal property of the proposed construction as well as their convenient adaption for upgrading and downgrading orthogonal projections. Examples involving biorthogonality with respect to particular weight functions and domains are also produced in this section. The conclusions and final remarks are presented in Sec.~\ref{sec:conclusions}.

\section{Preliminary Considerations}\label{sec:preliminary}

\noindent In this section we introduce the necessary background for a self-contained derivation of the proposed construction of BP. Since our purpose is to develop a methodology suitable for practical applications, we consider only biorthogonal representations of orthogonal projections, and therefore,  throughout the paper we restrict ourselves to consider only \textit{finite}-dimensional inner product spaces,  which we denote as $V(\In, w)$ to indicate that the inner product $\left\langle f,g\right\rangle$ between two real functions in $V(\In, w)$ is defined as 
\begin{align*}
\left\langle  f, g\right\rangle= \int_{\In} f(x)\, g(x)\, w(x)\, dx,
\end{align*}
where $w(x)$ is a weight function which satisfies $w(x) \geq 0$, for all $x \in \In$. For $k\in \N$, the subspace $V_k(\In, w) \subseteq V(\In, w)$ is specifically defined as the vector space spanned by the set of monomials $\{x^n\}_{n=0}^k$, i.e.
\begin{align*}
	V_k= \Spann\left(\{x^0,x,\ldots,x^k\}\right), \quad \text{for $x \in \In$}.
\end{align*}

\subsubsection*{Type A and B orthonormal polynomial sequences}

We introduce two definitions for the construction of orthonormal polynomial sequences (OPS). These two types, type A and type B, are general forms based on formulas used to construct sequences of orthogonal polynomials. Before, we define the two types, let us highlight a clarification of our notation. Unless otherwise specified, throughout the paper, superscripts are used as labels. The only exception is for the variable $x$ where the superscript indicates the variable $x$ raised to a power $i\in \N \cup\{0\}$, i.e., $x^i$.

\begin{definition}[Type A Orthonormal Polynomial Sequence]
A type A orthonormal polynomial sequence is a sequence of functions generated by
\begin{align}\label{eq:OPS}
p_n(x)= \sum_{i=0}^n a_i^n x^i,\quad \text{with $a_n^n \neq 0$,}
\end{align}
such that  $\left\langle  p_n, p_m\right\rangle = \delta_{n,m}$.
\end{definition}
For the convenience of some particular cases we differentiate the above definition with a second type, type B of OPS.

\begin{definition}[Type B Orthonormal Polynomial Sequence]
A type B orthonormal polynomial sequence is a sequence of functions generated by 
\begin{align}\label{eq:OPSB}
	p_n(x)= \sum_{i=0}^{\lfloor n/2 \rfloor} a_i^n x^{n-2i},\quad \text{with $a_0^n \neq 0$},
\end{align}
that satisfy $\left\langle p_n, p_m\right\rangle = \delta_{n,m}$. Here $\lfloor n/2 \rfloor$ indicates the integer part of $n/2$. 
\end{definition}

\noindent The polynomials $\left\{p_n(x)\right\}_{n=0}^k$ of Type A or B, form an orthonormal set that span the same subspace as the set of monomials $\{x^n\}_{n=0}^k$ with respect to the inner product in $V(\In,w)$ so that
\begin{align*}
	V_k=\Spann\left(\{x^n\}_{n=0}^k\right)= \Spann\left(\{p_n\}_{n=0}^k\right) \subseteq V(\In,w).
\end{align*}

\begin{property}
If $n\leq m$, then an OPS satisfies
\begin{align}\label{eq:ipxp}
	 \left\langle x^n, p_m\right\rangle= \begin{cases}\displaystyle 
\frac{\delta_{n,m}}{a_m^m} & \text{if $p_m$ is Type A,}\\
\displaystyle\frac{\delta_{n,m}}{a_0^m} & \text{if $p_m$ is Type B}.
\end{cases}
\end{align}
\end{property}

\begin{proof}  Since $x^n \in \Spann\left(\{p_i\}_{i=0}^n\right)$ it can be expressed in terms of the first $n$ orthonormal polynomials as 	
	\begin{align}\label{eq:monomial}
		x^n=\sum_{i=0}^{n} d_i^n p_i(x),
	\end{align}
	where $d_i^n \in \R$ for $i=0, \dots, n$. Hence, for any $m$ satisfying $n<m$ we have that $\left\langle x^n, p_m\right\rangle =0$ for both Type A and Type B OPS. For $n=m$, if $p_m$ is of Type A, using Definition~\eqref{eq:OPS}, we can express 
\begin{align}\label{eq:xp}
	a_m x^m= p_m- \sum_{i=0}^{m-1} a_i^m x^i
\end{align}
so that, from the previous case, it follows that $\left\langle x^m, p_m\right\rangle =1/{a_m^m}$. 

Similarly, if $p_m$ is of Type B, with $n=m$, then using Definition~\eqref{eq:OPSB} we have 
\begin{align}\label{eq:xpb}
	a_0^m x^m= p_m- \sum_{i=0}^{\lfloor m/2 \rfloor}a_i^m x^{m-2i},
\end{align}
so then from Eq.~\eqref{eq:monomial}, we have $\left\langle x^m, p_m\right\rangle= 1/{a_0^m}$. This proves Property 1.
\end{proof}	

\subsubsection*{Orthogonal Projections}

In what follows, it is important to recall some properties of subspaces that allows us to use results regarding orthogonal projections.

\begin{definition}[Direct sum of subspaces]
A vector space $V$ is said to be the \textit{direct sum} of two subspaces, say $V_k$ and $U$, written as
\begin{align*}
	V = V_k \oplus U
\end{align*}
if each $f \in V$ has a unique representation
\begin{align*}
	f= f_k + g, \quad \text{where $f_k \in V_k$ and $g \in U$}.
\end{align*}
\end{definition}
If $V$ is an inner product space $V(\In,w)$ a case of  much interest concerns complementary subspaces which are orthogonal. 
\begin{definition}[Orthogonal complement subspace]
A vector space $U$ is said to be the {\em {orthogonal complement}} of $V_k$ in $V(\In,w)$  if
\begin{align}\label{eq:orthc}
U= V_k^\perp=\left\{g \in V \ :\  \left\langle g, f_k \right\rangle = 0,\forall f_k \in V_k \right\}.
\end{align}
\end{definition}
\begin{definition}[Orthogonal projector]\label{prop:projector}
	Let two vectors spaces $V_k$ and  $V_k^\perp$ be orthogonal complements of each other. Then, given any $f\in V$, the map $\op_{V_k}: V \to V_k$ defined as 
	\begin{align*}
		\op_{V_k}f = f_k \in V_k,
	\end{align*}
	is called the \textit{orthogonal projection} (or orthogonal projector) of $V$ onto $V_k$ if it has the following properties:
	\begin{itemize}
		\item [(i)] $\op_{V_k}$ is idempotent, i.e., $\op^2_{V_k}=\op_{V_k}$.
		\item [(ii)] If $V=V_k \oplus V_k^\perp$ then $\op_{V_k} f_k = f_k$ if $f_k \in V_k$   and $\op_{V_k} g=0$ if $g \in V_k^\perp.$
	\end{itemize}
	\end{definition}
As a consequence of the property of Definition~\ref{prop:projector}(ii) we can express $V_k^\perp= \oI_V V- \op_{V_k} V=  (\oI_V- \op_{V_k}) V=\op_{V_k^\perp} V$, where $\oI_V$ is the identity operator in $V$. Subsequently, it follows that  every $f\in V$ can be decomposed in terms of two contributions $f= \op_{V_k} f +  \op_{V_k^\perp} f$.
\begin{property}[Self-adjoint operator]\label{prop:self}
It readily follows that $\op_{V_k}$ satisfies
\begin{align*}\left\langle f, \op_{V_k} g \right\rangle= \left\langle \op_{V_k}  f, g \right\rangle.
\end{align*}
An operator fulfilling the above property is said to be \textit{self-adjoint}.
\end{property}

The next two theorems are well-known fundamental results concerning the optimal approximation in an inner product subspace, also known as the \textit{least squares approximation}. For pedagogical reasons, we present the proofs tailored to our framework in Appendices A and B.
\begin{theorem}\label{thm:orthp}
Let $V(\In,w)$ be an inner product space and $V_k$ a finite-dimensional subspace of $V(\In,w)$. Then, the least squares approximation in $V_k$ of any $f \in V$ can be obtained by the projection of $f$ onto the subspace $V_k$, i.e., $\op_{V_k} f$.
\end{theorem}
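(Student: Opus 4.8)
The plan is to show that among all elements of $V_k$, the orthogonal projection $\op_{V_k}f$ is the unique minimiser of the approximation error $\|f-g\|$ over $g\in V_k$, which is exactly what the least squares approximation requires. The argument rests on the orthogonal decomposition already established above --- every $f\in V$ splits as $f=\op_{V_k}f+\op_{V_k^\perp}f$ with $\op_{V_k^\perp}f\in V_k^\perp$ --- together with the Pythagorean identity for orthogonal vectors.

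First I would fix an arbitrary $g\in V_k$ and write the error as a sum of two pieces,
\begin{align*}
f-g=\left(f-\op_{V_k}f\right)+\left(\op_{V_k}f-g\right).
\end{align*}
The second term $\op_{V_k}f-g$ lies in $V_k$ since both $\op_{V_k}f$ and $g$ do, while the first term is the residual $f-\op_{V_k}f=\op_{V_k^\perp}f\in V_k^\perp$.

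Next I would verify that these two pieces are orthogonal. This is immediate from the definition~\eqref{eq:orthc} of the orthogonal complement, as the residual lies in $V_k^\perp$ and $\op_{V_k}f-g\in V_k$. Alternatively, it can be checked directly using Property~\ref{prop:self} and Property~\ref{prop:projector}(i): for any $h\in V_k$ one has $\op_{V_k}h=h$, so $\left\langle f-\op_{V_k}f,h\right\rangle=\left\langle f,h\right\rangle-\left\langle \op_{V_k}f,h\right\rangle=\left\langle f,h\right\rangle-\left\langle f,\op_{V_k}h\right\rangle=0$.

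With orthogonality established, applying the Pythagorean identity to the decomposition gives
\begin{align*}
\|f-g\|^2=\left\|f-\op_{V_k}f\right\|^2+\left\|\op_{V_k}f-g\right\|^2\geq\left\|f-\op_{V_k}f\right\|^2,
\end{align*}
with equality precisely when $\op_{V_k}f-g=0$, i.e.\ $g=\op_{V_k}f$. This would show that $\op_{V_k}f$ uniquely minimises the error and is therefore the least squares approximation. The only delicate point is justifying the orthogonality of the residual to the whole of $V_k$; once that is secured from the orthogonal-complement structure, the remaining Pythagorean step is entirely routine.
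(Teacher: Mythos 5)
Your proof is correct and follows essentially the same route as the paper: both decompose the error as $f-g=(f-\op_{V_k}f)+(\op_{V_k}f-g)$, use the orthogonality of the residual to $V_k$, and conclude via the Pythagorean identity, with your version adding the (welcome) uniqueness remark. The only nitpick is a citation slip: the fact $\op_{V_k}h=h$ for $h\in V_k$ is Property~\ref{prop:projector}(ii), not (i).
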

\begin{proof} The proof is presented in Appendix A.\end{proof}
The next theorem prescribes a method of constructing an orthogonal projector onto the finite-dimensional subspace $V_k=\Spann\left(\{x^n\}_{n=0}^k\right)$,  with respect to the inner product defined on $V(\In, w)$. 
\begin{theorem}\label{thm:oop}
Let $\{p_n\}_{n=0}^k$ be an orthonormal basis for the inner product subspace $V_k(\In, w)$. Then the orthogonal projector operator $\op_{V_k}: V \to V_k$ can be expressed as
\begin{align}\label{eq:opob}
\op_{V_k} =\sum_{i=0}^k p_i \left\langle \cdot, p_i\right\rangle,
\end{align}
where $\left\langle \cdot, p_i\right\rangle$ performs an inner product with the basis polynomial $p_i$, i.e., for any $f\in V$ 
\begin{align*}
	\op_{V_k}f =\sum_{i=0}^k p_i \left\langle f, p_i\right\rangle.
\end{align*}
\end{theorem}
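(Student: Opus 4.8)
The plan is to verify that the operator $T:=\sum_{i=0}^k p_i\langle\,\cdot\,,p_i\rangle$ appearing on the right-hand side of \eqref{eq:opob} reproduces the defining behaviour of the orthogonal projector recorded in Property~\ref{prop:projector}(ii), and then to invoke the uniqueness of the decomposition $V=V_k\oplus V_k^\perp$ to conclude that $T=\op_{V_k}$. First I would observe that $T$ maps $V$ into $V_k$: for any $f\in V$ the image $Tf=\sum_{i=0}^k \langle f,p_i\rangle\, p_i$ is a linear combination of the basis polynomials $\{p_i\}_{i=0}^k$ and therefore lies in $V_k=\Spann(\{p_i\}_{i=0}^k)$.

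Next I would check the action of $T$ separately on the two summands of the direct sum. For $f_k\in V_k$, expand $f_k=\sum_{j=0}^k c_j\, p_j$ in the orthonormal basis; applying $T$ and using orthonormality $\langle p_j,p_i\rangle=\delta_{i,j}$ collapses the resulting double sum, giving $Tf_k=\sum_{i=0}^k c_i\, p_i=f_k$. For $g\in V_k^\perp$, the definition~\eqref{eq:orthc} of the orthogonal complement forces $\langle g,p_i\rangle=0$ for every $i$, since each $p_i\in V_k$; hence every term of $Tg$ vanishes and $Tg=0$.

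Finally I would use the splitting $f=\op_{V_k}f+\op_{V_k^\perp}f$ noted after Property~\ref{prop:projector}: any $f\in V$ decomposes uniquely as $f=f_k+g$ with $f_k\in V_k$ and $g\in V_k^\perp$, so by linearity $Tf=Tf_k+Tg=f_k=\op_{V_k}f$. As this holds for all $f\in V$, the two operators agree and \eqref{eq:opob} is established.

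The reasoning is elementary throughout; the only point needing care — and the closest thing to an obstacle — is ensuring that $T$ reduces to the identity on \emph{all} of $V_k$ and annihilates \emph{all} of $V_k^\perp$, not merely the individual basis vectors, which is exactly what the unique direct-sum decomposition supplies. An equivalent route would be to characterise $\op_{V_k}$ as the unique self-adjoint idempotent operator with range $V_k$ (Properties~\ref{prop:self} and~\ref{prop:projector}(i)) and then verify $T^2=T$ and $\langle Tf,g\rangle=\langle f,Tg\rangle$ directly; both approaches hinge on the same orthonormality relation $\langle p_i,p_j\rangle=\delta_{i,j}$.
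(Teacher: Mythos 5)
Your proposal is correct and takes essentially the same route as the paper: both arguments reduce to verifying, via the orthonormality relation $\left\langle p_i, p_j\right\rangle=\delta_{i,j}$, that the operator $\sum_{i=0}^k p_i \left\langle \cdot, p_i\right\rangle$ acts as the identity on $V_k$ (by expanding $f_k$ in the basis $\{p_n\}_{n=0}^k$) and annihilates $V_k^\perp$. The only minor differences are that the paper additionally checks idempotence $\op_{V_k}^2=\op_{V_k}$, whereas you omit that redundant step and instead make explicit the concluding inference --- uniqueness of the decomposition $V=V_k\oplus V_k^\perp$ together with linearity --- which the paper leaves implicit; neither difference alters the substance of the argument.
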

\begin{proof} The proof is presented in Appendix B. \end{proof}

\section{Biorthogonal representation of orthogonal projectors}\label{sec:biorthogonal}
The next theorem establishes two distinctive features of the proposed BPS. (i) Their construction readily follows from known OPS of either Type A or B. (ii) The BPS provide us with the  biorthogonal representation for the orthogonal projection operator $\op_{V_k}$.
\begin{theorem}\label{thm:biorth}	
 Let $\{p_n\}_{n=0}^k$ be a finite OPS. Then the orthogonal projector operator $\op_{V_k}$ can be represented as
 \begin{align}\label{eq:opb}
 \op_{V_k}= \sum_{i=0}^k x^i \left\langle \cdot, \beta_i^k \right\rangle,
 \end{align}
where the functions $\beta_n^k(x)$ are defined as
\begin{itemize}
	\item[(i)] \begin{align}\label{eq:bet}
		\beta_n^k(x)= \sum_{j=n}^k a_n^j\ p_j(x),\,x \in \In,
		\end{align}
	if $\{p_n\}_{n=0}^k$ is an OPS of Type A as defined in Eq.~\eqref{eq:OPS}, or 
\item[(ii)] \begin{align}\label{eq:betb}
	\beta_n^k(x)= \sum_{i=0}^{\lfloor (k-n)/2 \rfloor} a_i^{2i+n}\ p_{2i+n} (x),
\end{align}
if $\{p_n\}_{n=0}^k$ is  an OPS of Type B as defined in Eq.~\eqref{eq:OPSB}.
\end{itemize}
\end{theorem}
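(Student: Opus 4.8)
The plan is to reduce the claimed representation directly to the orthonormal representation of the projector already established in Theorem~\ref{thm:oop}, namely $\op_{V_k} = \sum_{j=0}^k p_j \langle \cdot, p_j \rangle$. I introduce the candidate operator $Q = \sum_{i=0}^k x^i \langle \cdot, \beta_i^k \rangle$, so that the entire task becomes showing $Q = \op_{V_k}$. Rather than re-checking the projector axioms from scratch, I would substitute the explicit expressions~\eqref{eq:bet} or~\eqref{eq:betb} for $\beta_i^k$ into $Q$, use linearity of the inner product to pull each $p_j$ outside, and then interchange the order of the resulting double summation so as to collect, for each fixed index $j$, the coefficients of all the monomials that assemble $p_j$.

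For Type A this is immediate: inserting~\eqref{eq:bet} gives $Q = \sum_{i=0}^k \sum_{j=i}^k a_i^j\, x^i \langle \cdot, p_j\rangle$, and swapping the order of summation over the triangular index set $\{0 \le i \le j \le k\}$ leaves $\sum_{j=0}^k ( \sum_{i=0}^j a_i^j x^i )\langle \cdot, p_j \rangle$. The inner bracket is precisely the definition~\eqref{eq:OPS} of $p_j$, so $Q = \sum_{j=0}^k p_j \langle \cdot, p_j \rangle = \op_{V_k}$. For Type B the same idea applies after the reindexing $j = 2i+n$: inserting~\eqref{eq:betb} and setting $j = 2i+n$ recasts the double sum over $(n,i)$ as a sum over $0 \le j \le k$ with $0 \le i \le \lfloor j/2 \rfloor$ and $n = j - 2i$, so that the monomials collected for fixed $j$ are exactly $\sum_{i=0}^{\lfloor j/2 \rfloor} a_i^j x^{j-2i} = p_j$ from~\eqref{eq:OPSB}, again returning $\op_{V_k}$. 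The step requiring the most care is this change of summation order: I would verify that the triangular (Type A) and parity-constrained (Type B) index sets are traversed exactly by the floor-function limits, with no term double-counted or dropped, since this is where the Type B bookkeeping could go wrong.

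Finally, I would record why the construction works conceptually, which also yields the biorthogonality promised by the title: since each $\beta_n^k$ is a linear combination of the $p_j$ with $j \le k$, we have $\beta_n^k \in V_k$, so $Q$ maps $V$ into $V_k$ and annihilates $V_k^\perp$ (because $\langle g, \beta_i^k \rangle = 0$ for $g \in V_k^\perp$), as an orthogonal projector must. The complementary fact $\langle x^m, \beta_n^k \rangle = \delta_{m,n}$ for $0 \le m,n \le k$, which follows from orthonormality $\langle p_i, p_j \rangle = \delta_{i,j}$ together with the coefficient definitions, shows that $\{\beta_n^k\}$ is the dual basis of the monomials in $V_k$ and that $Q$ is the canonical dual-basis expansion; equivalently, this gives $Q\,x^m = x^m$ so that $Q$ acts as the identity on $V_k$. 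Combined with the direct-sum decomposition $V = V_k \oplus V_k^\perp$, agreement of $Q$ and $\op_{V_k}$ on both summands forces $Q = \op_{V_k}$.
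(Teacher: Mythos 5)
Your proof is correct, but it takes a genuinely different route from the paper's. The paper argues by induction on $k$: it checks $k=0$, then uses the update rule $\beta_n^{k+1}=\beta_n^{k}+a_n^{k+1}p_{k+1}$ for Type A (and its parity-split analogue for Type B, organised through the index sets $\mathcal{S}_e$ and $\mathcal{S}_o$ of $n$ with $k-n+1$ even or odd) to show that
\begin{align*}
\sum_{n=0}^{k+1} x^n \left\langle \cdot, \beta_n^{k+1}\right\rangle = \op_{V_k} + p_{k+1}\left\langle \cdot, p_{k+1}\right\rangle = \op_{V_{k+1}}.
\end{align*}
Your argument is non-inductive: you substitute the definitions of $\beta_n^k$ into the candidate operator, exchange the order of the finite double sum over the triangular (Type A) or parity-constrained (Type B) index set, and reassemble each $p_j$ from its defining expansion \eqref{eq:OPS} or \eqref{eq:OPSB}, reducing the claim in one pass to Theorem~\ref{thm:oop}. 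Both proofs rest on the same ingredients, but your bookkeeping is global where the paper's is incremental. What your version buys is brevity and a uniform treatment of Type B, since the change of variables $j=2i+n$ (which you correctly verify is a bijection between the two index sets) replaces the paper's even/odd case analysis. What the paper's version buys is that the proof itself exhibits the recursion $\beta_n^{k}\to\beta_n^{k+1}$, which is the algorithmic content advertised in the title and reused later for adaptive upgrading of projections; your proof establishes the identity but leaves that update rule implicit.

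One remark on your closing paragraph: the biorthogonality $\left\langle x^m, \beta_n^k\right\rangle=\delta_{m,n}$ is not as immediate as ``follows from orthonormality together with the coefficient definitions'' suggests; in the paper it is a separate Property proved by its own induction, using \eqref{eq:ipxp} and \eqref{eq:xp}. Since your main argument (summation exchange plus Theorem~\ref{thm:oop}) is already complete without it, this looseness does not create a gap in your proof of the theorem, but the alternative dual-basis route you sketch there would require that claim to be proved rather than asserted.
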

\begin{proof}

For $n=k=0$ we have $\beta_0^0= p_0$ and $x^0=p_0$. Hence, operator $\op_{V_0}$ as given by Eq.~\eqref{eq:opb} for $k=0$ is the orthogonal projection operator onto $V_0$. Assuming that $\op_{V_k}$ in Eq.~\eqref{eq:opb} is the orthogonal projection operator onto $V_k$ we will show that then $\op_{V_{k+1}}$ is the orthogonal projection operator onto $V_{k+1}$.

Case (i).  Since ${\displaystyle{\beta_n^{k+1} = \beta_n^{k}+a_n^{k+1} p_{k+1}}}$, we have
\begin{align*}
\sum_{n=0}^{k+1} x^n \left\langle \cdot, \beta_n^{k+1} \right\rangle&=\sum_{n=0}^{k} x^n \left\langle \cdot, \beta_n^{k+1} \right\rangle+ x^{k+1}\left\langle \cdot, \beta_{k+1}^{k+1}\right\rangle \\
&= \sum_{n=0}^{k} x^n \left\langle \cdot, \beta_n^{k}\right\rangle+ \sum_{n=0}^{k} x^n a_n^{k+1} \left\langle \cdot, p_{k+1}\right\rangle+ x^{k+1} a_{k+1}^{k+1} \left\langle \cdot,p_{k+1}\right\rangle\\
&=\op_{V_{k}} + \sum_{n=0}^{k+1} x^n a_{n}^{k+1} \left\langle \cdot,p_{k+1}\right\rangle\\
	&= \op_{V_{k}} + p_{k+1} \left\langle \cdot,p_{k+1}\right\rangle\quad{\text{using \eqref{eq:OPS}}} \\
&= \op_{V_{k+1}}.
\end{align*}

Case (ii). From the definition of $\beta_n^k$ given by Eq.~\eqref{eq:betb} we have that
\begin{align*}
\beta_n^{k+1}  = \begin{cases}
	\displaystyle \beta_n^k+a^{k+1}_{(k-n+1)/2}\ p_{k+1} & \text{for $k-n+1$ even},\\
	\beta_n^{k} & \text{for $k-n+1$ odd},\\
\beta_{k+1}^{k+1}= a^{k+1}_0\, p_{k+1} & \text{for $n=k+1$}.
\end{cases}
\end{align*}
For $n \in \N$ let us define two sets of numbers
\begin{align*}
{\cal{S}}_e&= \{n\le k\ : \ k-n+1\ \text{even}\},\quad\text{and}\quad
{\cal{S}}_o= \{n\le k \ : \ k-n+1\  \text{odd}\}.
\end{align*}
Then,
\begin{align*}
\sum_{n=0}^{k+1} x^n \left\langle \cdot, \beta_n^{k+1} \right\rangle&=\sum_{n=0}^{k} x^n \left\langle \cdot, \beta_n^{k+1} \right\rangle+ x^{k+1}\left\langle \cdot, \beta_{k+1}^{k+1}\right\rangle = \sum_{n \in {\cal{S}}_o} x^n \left\langle \cdot, \beta_n^{k+1} \right\rangle\\
&+\sum_{n \in {\cal{S}}_e} x^n \left\langle \cdot, \beta_n^{k+1} \right\rangle+ x^{k+1} \left\langle \cdot, \beta_{k+1}^{k+1}\right\rangle \\
& = \sum_{n=0}^{k} x^n \left\langle \cdot, \beta_n^{k} \right\rangle+ \sum_{n \in {\cal{S}}_e}x^n \left\langle \cdot, a_{(k-n+1)/2}^{k+1} p_{k+1}\right\rangle+ x^{k+1} \left\langle \cdot , a_0 ^{k+1} p_{k+1}\right\rangle\\ 
& = \op_{V_{k}} + \sum_{i=0}^{\lfloor k/2\rfloor} x^{k+1-2i} a_i^{k+1} \left\langle \cdot, p_{k+1}\right\rangle= \op_{V_{k}} +  p_{k+1} \left\langle \cdot , p_{k+1}\right\rangle = \op_{V_{k+1}}.
\end{align*}
\end{proof}
\begin{remark}
It is worth noticing that Property~\ref{prop:self} implies that 
\begin{align*}
	\op_{V_k}= \sum_{i=0}^k x^i \left\langle \cdot, \beta_i^k \right\rangle = 
\sum_{i=0}^k \beta_i^k  \left\langle \cdot ,  x^i\right\rangle .
\end{align*}
\end{remark}
\begin{property}
The set $\{\beta^k_n\}_{n=0}^k$ defined in~\eqref{eq:bet} or in~\eqref{eq:betb} is biorthogonal to the set of monomials $\{x^n\}_{n=0}^k$ on the interval $x \in \In\subseteq \R$, i.e., 
\begin{align*}
	\left\langle \beta_n^k, x^m\right\rangle =\delta_{n,m}.
\end{align*}
\end{property}
\begin{proof}
We consider only case (i), in which we will prove the result by induction. The proof of case (ii) follows a similar approach. For $n=k=0$ the property holds, because $\beta^0_0=a_0^0 p_0$, $x^0=p_0/a_0^0$, and  $\left\langle p_0, p_0\right\rangle =1$. For the induction step within the proof, we assume that
\begin{align}\label{eq:asu}
\left\langle \beta_n^k, x^m\right\rangle =\delta_{n,m},\quad \text{for $n=0,\dots,k$ and $m=0,\dots,k$}.
\end{align}
We must consider four specific cases separately, to show that 
\begin{align*}
	\left\langle \beta_n^{k+1}, x^m\right\rangle =\delta_{n,m}, \quad\text{for $n=0,\dots,k+1$ and $m=0,\dots,k+1$}.
\end{align*}
The four cases to consider are
\begin{enumerate}
\item[(i)] $n=0,\dots, k$ and $m=0,\dots, k$.
\item[(ii)] $n=k+1$ and $m=0,\dots,k$.
\item[(iii)] $n=0,\dots,k$ and $m=k+1$.
\item[(iv)] $n=k+1$ and $m=k+1$.
\end{enumerate}	

\noindent Case (i): Using~\eqref{eq:bet} we write
\begin{align*}
	\left\langle \beta_n^{k+1}, x^m\right\rangle= \left\langle \beta_n^k, x^m\right\rangle+ a_{n}\left\langle p_{k+1},  x^m\right\rangle ,
\end{align*}
so that from the assumption $\left\langle \beta_n^k, x^m\right\rangle =\delta_{n,m}$, with $n=0,\dots,k$ and $m=0,\dots,k$, and Eq.~\eqref{eq:ipxp}, it follows that $\left\langle \beta_n^{k+1}, x^m\right\rangle = \delta_{n,m}$ for $n=0,\dots,k$ and $m=0,\dots,k$.\\

\noindent Case (ii):  $\left\langle \beta_{k+1}^{k+1}, x^m\right\rangle = a_{k+1}^{k+1}\left\langle p_{k+1},  x^m\right\rangle =0$ from \eqref{eq:ipxp}, since $m<k+1$. \\

\noindent Case (iii): Using Eqs.~\eqref{eq:bet},~\eqref{eq:xp}, and~\eqref{eq:asu} we have
\begin{align*}
\left\langle \beta_n^{k+1}, x^{k+1}\right\rangle&=\left\langle \beta_n^{k}, x^{k+1}\right\rangle+ a_{n}^{k+1} \left\langle p_{k+1}, x^{k+1}\right\rangle = \left\langle \beta_n^{k}, \frac{p_{k+1}}{a_{k+1}^{k+1}} \right\rangle
- \left\langle \beta_n^{k}, \sum_{i=0}^k \frac{a_{i}^{k+1}}{a_{k+1}^{k+1}} x^i \right\rangle+\frac{a_{n}^{k+1}}{a_{k+1}^{k+1}}\\
&= 0- \frac{a_{n}^{k+1}}{a_{k+1}^{k+1}} + \frac{a_{n}^{k+1}}{a_{k+1}^{k+1}}=0.
\end{align*}

\noindent Case (iv) The result follows by setting $m=k+1$ in Case (ii):
\begin{align*}
	\left\langle \beta_{k+1}^{k+1}, x^{k+1}\right\rangle = a_{k+1}^{k+1}\left\langle p_{k+1},  x^{k+1}\right\rangle = \frac{a_{k+1}}{a_{k+1}}=1.
\end{align*}
From the results of all four cases (i), (ii), (iii), and (iv) we can conclude by mathematical induction that Eq.~\eqref{eq:asu} holds.
\end{proof}

\subsubsection*{Application to downgrading orthogonal projectors}

\noindent Let us suppose that from knowing the orthogonal projection of $f$, $\op_{V_k} f=f_k$, one is interested in producing the orthogonal projection onto the subspace left by removing one element, $x^{\ell_j}$, from the set $\{x^n\}_{n=0}^k$, i.e., the orthogonal projection on the new subspace spanned by the set
\begin{align*}
	{\cal{D}}_{k\backslash{\ell_j}}=\{x^0,x^1,\ldots,x^{\ell_j-1}, x^{\ell_j+1}, \ldots,x^k\}.
\end{align*}
We use the notation $V_{k\backslash{\ell_j}}=\Spann\left({\cal{D}}_{k\backslash{\ell_j}}\right)$ to indicate the subspace spanned by this set. The downgrading of the projector $\op_{V_k} \to \op_{V_{k\backslash{\ell_j}}}$, is simpler using BP than having to downgrade an OPS. It can be achieved through the backward adaptive biorthogonalisation formula derived in Ref.~\cite{LRN02} and extended in Ref.~\cite{LRN07}:
\begin{align}\label{eq:betb2}
\beta_n^{k\backslash\ell_j}(x)= \beta_n^{k}(x) - \beta_{\ell_j}^{k}(x)\frac{\left\langle \beta_{\ell_j}^k, \beta_n^{k}\right\rangle}{\left\langle \beta_{\ell_j}^{k}, \beta_{\ell_j}^{k} \right\rangle}, \quad \text{for $n=0,\dots,\ell_{j-1},\ell_{j+1},\dots,k$ and $x \in \In$}.
\end{align}
\begin{property}\label{prop:appenC}
Let the set $\{\beta_n^k\}_{n=0}^k$ satisfy the following two properties:
	\begin{itemize}
\item[(i)] is biorthogonal to the set $\{x^n\}_{n=0}^k$.
\item[(ii)] provides a representation of $\op_{V_k}$ of the
	form given by Eq.~\eqref{eq:opb}, 
	\end{itemize}
then the set of functions $\{\beta_n^{k\backslash\ell_j}\}_{n=0}^k$ defined as in Eq.~\eqref{eq:betb2},  
 \begin{itemize}
	 \item[(iii)] is biorthogonal to the set ${\cal{D}}_{k\backslash{\ell_j}}$.  
	 \item[(iv)] provides a representation of $\op_{V_{k\backslash{\ell_j}}}$, the orthogonal projection onto $V_{k\backslash{\ell_j}}=\Spann\left(\{{\cal{D}}_{k\backslash{\ell_j}}\}\right)$.
\end{itemize}
\end{property}
\begin{proof} 
	The proof is presented in Appendix C. 
\end{proof}
\begin{remark}\label{rem:boomp}	
Since the approximation error arising from removing the element $x^{\ell_j}$  from the projection of $f$ onto the subspace $V_k$ given by $\op_{V_{k}} f$ is (c.f. \eqref{eq:pvnje2}) 
\begin{align*}
	\op_{V_{k}} f- \op_{V_{k\backslash{\ell_j}}}f = \frac{\beta_{\ell_j}^k \left\langle  f,  \beta_{\ell_j}^k \right\rangle}{\|\beta_{\ell_j}^k\|^2_2}.
\end{align*}
It follows that by removing  the element $x^{\ell_j}$  such that 
\begin{align*}
	\frac{\left|\left\langle  f,  \beta_{\ell_j}^k \right\rangle\right|^2}{\|\beta_{\ell_j}^k\|^2_2}\, \quad \text{is minimum}
\end{align*}
the remaining downgraded approximation minimises the new approximation error norm.
\end{remark}

\subsection{Biorthogonal polynomials from Type A classic OPS}
We illustrate our methodology by considering the particular families of biorthogonal polynomials which are generated by classical OPS. In this subsection we consider only those OPS of Type A (c.f. Eq.~\eqref{eq:OPS}). These include OPS composed of
\begin{itemize}
	\item[(i)] Legendre polynomials $\{\tilde{P}_n(x)\}_{n=0}^k$ defined on the interval $\In=[0,b]$, with inner product weight $w(x)=1$.
	\item[(ii)] Laguerre polynomials $\{L_n(x)\}_{n=0}^k$ defined on the interval $\In=[0,\infty)$, with inner product weight $w(x)= \exp(-x)$.
\end{itemize}
\subsubsection{Biorthogonal polynomials derived from Legendre polynomials}
In this case the orthogonal polynomials $\{p_n(x)\}_{n=0}^k$ that generate the biorthogonal polynomials Eq.~\eqref{eq:bet} are the Legendre polynomials $\{\tilde{P}_n(x)\}_{n=0}^k$ defined on the interval $\In=[0,b]$. These polynomials on $[0,b]$ can be obtained by the formula:
\begin{align*}
	\tilde{P}_j(x)= (-1)^j \frac{\sqrt{2j+1}}{\sqrt{b}} \sum_{i=0}^j (-1)^i \binom{j}{i} \binom{j+i}{i} \frac{x^i}{b^i},
\end{align*}
which is in the exact form defined by being Type A. Here, we note that $b^i$ indicates the $i$-th power of $b$. For this particular case, the coefficients $a_i^j$ that are required to obtain the functions $\beta_n^k$ as defined in Eq.~\eqref{eq:bet} are 
\begin{align*}
		 a_i^j= (-1)^{i+j}  \frac{\sqrt{2j+1}}{ b^i \sqrt{b}} \binom{j}{i} \binom{j+i}{i}.
\end{align*}
Accordingly, the BP generated by Legendre polynomials on $[0,b]$ adopt the form 
\begin{align*}
	\beta_n^k&= \sum_{j=n}^k \frac{{2j+1}}{{b}}(-1)^{j+n} \binom{j}{n} \binom{j+n}{n} \sum_{i=0}^j (-1)^{i+j}\binom{j}{i} \binom{j+i}{i} \frac{x^i}{b^{n+i}}\\
	&= \frac{(-1)^n}{b^{n+1}} \sum_{j=n}^k \sum_{i=0}^j (-1)^{i} \frac{{2j+1}}{b^{i}}
 \binom{j}{n} \binom{j+n}{n} \binom{j}{i} \binom{j+i}{i} x^i.
\end{align*}
Subsequently, the coefficients $c_n^k$ calculated by 
\begin{align}\label{eq:bn01}
c_n^k&=\int_0^b f(x) \beta_n^k(x) \,dx \nonumber\\
&= \frac{(-1)^n}{b^{n+1}} \sum_{j=n}^k \sum_{i=0}^j \frac{{2j+1}}{b^{i}} (-1)^{i} \binom{j}{n} \binom{j+n}{n} \binom{j}{i} \binom{j+i}{i}\int_0^b f(x) x^i \,dx, 
\end{align}
provide the orthogonal projection of $f$ onto $V_k$ as linear combination of simple monomials 
\begin{align}\label{eq:opbn01}
f_k(x)= \sum_{n=0}^k c_n^k x^n,\quad x\in [0,b].
\end{align}
When $f(x)$ is a probability density function the integrals in 
 Eq.~\eqref{eq:bn01} are the statistical moments of the probability distribution. On $V(\In,w)$ we define the $i$-th \textit{generalised} 
moment of a function as 
\begin{align}\label{eq:cn01}
\mu_i=\int_{\In} x^i f(x)\, w(x)\ dx.
\end{align}

\subsubsection*{Example 1: Polynomial approximation of noisy data using Legendre polynomials (Type A)}

We demonstrate the application of BP outlined above, using  Legendre orthogonal polynomials $\{\tilde{P}_n(x)\}_{n=0}^k$ defined on the interval $[0,b]$, for calculating the coefficients of a polynomial regression problem. In our example, we consider numerical data simulated using $N=501$ uniformly distributed points on the interval ${\cal{I}}=[0,1]$, $x_i=0.002i,\,i=0, \dots, 500$, generated by the chirp function $f_i^{\rm o}=f(x_i)= \cos(7 \pi x_i^2) + \epsilon_i$, where $\epsilon_i$ are independent Gaussian random variables with mean zero and variance equal to $0.01$. The simulated data is shown in Fig.~\ref{fig:Fig1}(a) by the green dots. Our goal is to approximate the functional form of these data points by a polynomial. As established by Theorems~\ref{thm:orthp} and~\ref{thm:biorth}, the coefficients $c_n^k$, for $n=0,\dots,k$ in 
\begin{align*}
	f_k(x)= \sum_{n=0}^k c_n^k x^n,
\end{align*}
give rise to the polynomial that minimises the $L^2$-norm of the approximation error. These can be computed using numerical integration methods, in our case, the fourth-order Simpson's 1/3 rule.


The green solid line in Fig.~\ref{fig:Fig1}(b) is the approximation obtained by fixing $k=17$. Fig.~\ref{fig:Fig1}(c) displays the downgraded version of that approximation, denoted  $f_K(x)$, and achieved by removing three terms (one by one) from $f_{k}(x)$, as prescribed in Remark~\ref{rem:boomp}. It is worth noticing that the removal of the three terms, corresponding to $x$, $x^4$, and $x^{17}$, does not significantly change the quality of the approximation. The $L^2$-norm of the error in representing the noisy chirp with the approximation $f_{k}$ given in Fig.~\ref{fig:Fig1}(b) is $4.55\times 10^{-2}$, while the downgraded approximation $f_K$ involving 15 terms (plotted in Fig.~\ref{fig:Fig1}(c)) produces $L^2$-norm error only slightly larger ($5.18\times 10^{-2}$). In order to visually highlight the convenience of the proposed  downgrading, in Fig.~\ref{fig:Fig1}(d) we show the polynomial approximation $f_{k}$ involving also $15$ terms, but composed of the first $15$ Legendre polynomials. As perceived in the figure,  this is a much worse approximation what is also reflected in the $L^2$-norm error value which increases drastically to $1.76\times 10^{-1}$. To further emphasise the comparison we now augment the downgrading to keep $K=13$ terms by removing the additional two terms $x^2$ and $x^3$. Fig.~\ref{fig:Fig1}(e) shows the resulting downgraded approximation and Fig.~\ref{fig:Fig1}(f) the approximation $f_k$, with $k=12$, composed by the first 13 Legendre polynomials. Please see the corresponding values of the $L^2$-norm errors in Table~\ref{tab:errorEx1}.

With these error values we have calculated the Bayesian information criterion (BIC) score~\cite{Pri81} which is used to assess the model selection for independent Gaussian random data, taking into account the number of parameters $\gamma$ required in the approximation,
 \begin{align*}
	\text{BIC}= \gamma \log(N)  + N \log\left(\frac{1}{N} \sum_{i=0}^{N}(f_i^{\rm o} - f_k(x_i))^2\right).
 \end{align*}
 One can interpret the BIC score as a logarithm of the efficiency between the model accuracy and its complexity, where a lower value represents a better approximation. In the above definition we have, $\gamma=18$ for approximation (b), $\gamma=15$ for approximations (c) and (d), and $\gamma=13$ for approximations (e) and (f). Even if the best BIC value amongst the three approximations is produced by model (b), model (c) with three fewer parameters would increase the BIC value by $3.59\%$. Adopting model (d) with also three fewer parameters would increase it by $83.12\%$. Since approximations with smaller (more negative) BIC are in general preferred, these scores highlight the advantage of reducing the amount of terms in the approximation by removing the three selected terms, rather than reducing the polynomial degree.

\begin{figure}[ht!]
\begin{center}
\includegraphics[width=\columnwidth]{./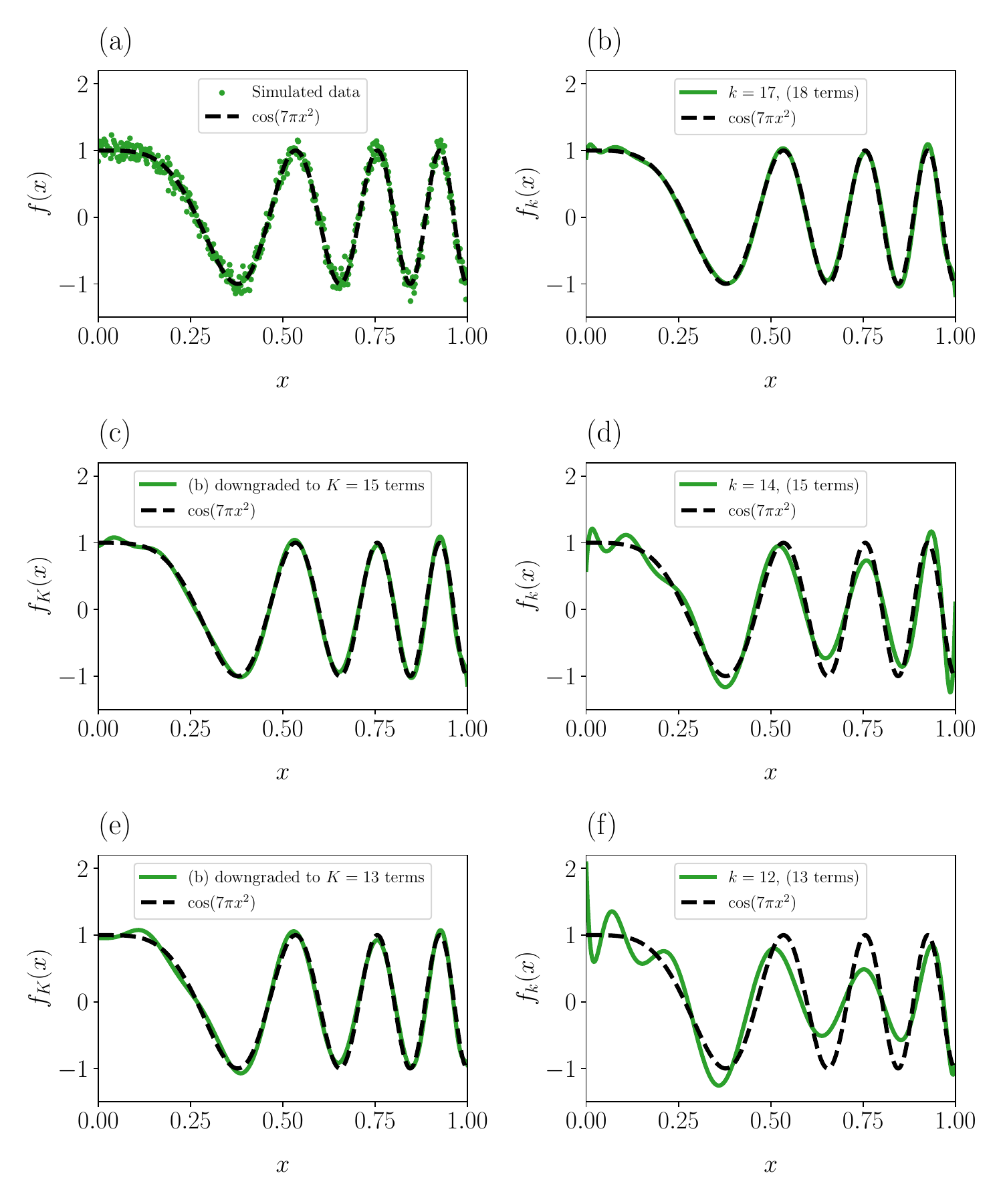}
\caption{Polynomial approximations to the simulated noisy chirp data represented as green points displayed in (a). The black dashed line in all fourth plots represent the original (noiseless) chirp function $f(x)= \cos(7 \pi x^2)$ on the interval $[0,1]$. (b) The green line corresponds to the polynomial approximation using the first $18$ Legendre polynomials corresponding to $k=17$. (c) The green line is the downgraded approximation $f_K$ to $15$ terms of the approximation displayed in figure (b) by removing three terms $x, x^4$, and $x^{17}$ following our methodology. (d) The green line is the polynomial approximation of the simulated data using the first $15$ Legendre polynomials $f_k$ with $k=14$ (the same number of terms as in (c)). Figure (e) presents the downgraded approximation $f_K$ to $13$ terms of the approximation in figure (b) by removing five terms $x, x^2, x^3, x^4$, and $x^{17}$. (f) The green line is the polynomial approximation of the simulated data using the first $13$ Legendre polynomials $f_k$ with $k=12$ (the same number of terms as in (e)).  
\label{fig:Fig1}}
\end{center}
\end{figure}

\begin{center}
	\begin{table}[htp!]
	\caption{Values of the $L^2$-norm error and the Bayesian information criterion (BIC) of the approximations $f_k$ and the downgraded approximations $f_K$ to the original simulated noisy chirp data.\label{tab:errorEx1}}
	\begin{tabular}{c|c|c}
		\centering {\bf Approximation Type} &  {\bf $L^2$-norm} & {\bf BIC}\\
		\hline
		 \centering 18-term Legendre approximation $f_k$ with $k=17$ & $4.55\times 10^{-2}$ &$-2972.37$\\
		 \hline
		 \centering 18-term Legendre approximation downgraded to $K=15$ terms $f_K$  &$5.18\times 10^{-2}$&$-2869.45$\\
		 \centering  15-term Legendre approximation $f_k$ with $k=14$  &$1.76\times 10^{-1}$ &$-1623.20$\\
		 \hline
		 \centering  18-term Legendre approximation downgraded to $K=13$ terms $f_K$ &$7.49\times 10^{-2}$ &$-2516.57$\\
		 \centering  13-term Legendre approximation $f_k$ with $k=12$ & $2.94\times 10^{-1}$ & $-1138.01$
	   \end{tabular}
	   \end{table}
\end{center}


\subsubsection{Biorthogonal polynomials from Laguerre polynomials}

In this case the orthogonal polynomials  $\{p_n(x)\}_{n=0}^k$ generating the biorthogonal polynomials Eq.~\eqref{eq:bet} are the Laguerre polynomials $\{L_n(x)\}_{n=0}^k$ in $V([0,\infty), w)$, with $w(x)= \exp(-x)$. These polynomials can be obtained by formula~\cite{Kre78}
\begin{align}\label{eq:Ln}
L_{j}(x)= \sum_{i=0}^j \binom{j}{i} \frac{(-1)^{i}}{i!} x^i.
\end{align}
Consequently, the coefficients $a_i^j$ to obtain the polynomial $\beta_n^k(x)$ through  Eq.~\eqref{eq:bet} in this particular case are
\begin{align*}
a_i^j= \binom{j}{i} \frac{(-1)^{i}}{i!}.
\end{align*}
Thus, the functions $\beta_n^k(x)$ adopt the particular form
\begin{align*}
	\beta_n^k(x)&= \sum_{j=n}^k  \binom{j}{n} \frac{(-1)^{n}}{n!}
\sum_{i=0}^j \binom{j}{i} \frac{(-1)^{i}}{i!}  x^i= \sum_{j=n}^k \sum_{i=0}^j (-1)^{n+i}\frac{1}{n!}\frac{1}{i!}
 \binom{j}{n} \binom{j}{i}   x^i,
\end{align*}
for $x\in [0,\infty)$. Subsequently, the coefficients $c_n^k$ producing the orthogonal projection of $f$ onto $V_k$ as linear combination of simple monomials 
\begin{align}
\label{Lagap}
f_k(x)= \sum_{n=0}^k c_n^k x^n,\quad x\in [0,\infty)
\end{align}
are calculated as
\begin{align}\label{eq:bnLa}
c_n^k&=\int_0^\infty f(x) \beta_n^k(x)\,e^{-x} \,dx = \sum_{j=n}^k \sum_{i=0}^j \frac{(-1)^{n+i}}{n!\,i!}\binom{j}{n} \binom{j}{i} \mu_i
\end{align}
with 
\begin{align}\label{eq:moLa}
\mu_i=\int_0^\infty x^i f(x) e^{-x} \,dx.
\end{align}

\subsubsection*{Example 2: Analytical polynomial approximation of the exponential decay}

This example aims at deriving a closed polynomial form for approximating the exponential decay function $f(x)=\exp(-\alpha x)$, for $\alpha >0$. In the first instance we use biorthogonal polynomials derived from Laguerre polynomials $\{L_n(x)\}_{n=0}^k$ over the interval $[0,\infty)$. Consequently, the generalised moments $\mu_i$  for these functions are analytically obtained as 
\begin{align*}
	\mu_i=\int_{0}^{\infty} x^{i}e^{-\alpha x} e^{-x}\,dx=\int_{0}^{\infty}  x^{i}e^{-(\alpha+1) x} \,dx=  \frac{i!}{(\alpha+1)^{i+1}}.
\end{align*}
On replacing these moments in Eq.~\eqref{eq:bnLa} we obtain:
\begin{align*}
	c_n^k&=\sum_{j=n}^k \frac{(-1)^{n}}{n!}\binom{j}{n} \sum_{i=0}^j \binom{j}{i} \frac{(-1)^i}{(\alpha+1)^{i+1}}
 = \sum_{j=n}^{k}\frac{(-1)^{n}}{n!}\binom{j}{n}\frac{\alpha^j}{(\alpha+1)^{j+1}}.
\end{align*}
Thus, our Laguerre-based biorthogonal approximation to the exponential decay has the closed form given by
\begin{align}\label{eq:expdecay}
	f_k(x) = \sum_{n=0}^{k} \sum_{j=n}^k \frac{(-1)^{n}}{n!}\binom{j}{n} \frac{\alpha^j}{(\alpha+1)^{j+1}}\, x^n,\quad 
	x\in [0,\infty).
\end{align}
The approximation ${f}_k(x)$, for $\alpha=1$ and $k=14$, is shown in Fig.~\ref{fig:Fig2}(a). We notice that for $\alpha=1$ and $x>10$ the exponential decay takes vanishing small values. More precisely, the value of the exponential at $x=10$ is $f(10)=\exp(-10)< 4.54 \times 10^{-5}$. Consequently, we find it interesting to further compare the approximation given in Eq.~\eqref{eq:expdecay} to the approximation resulting from the Legendre-based BP defined on the closed interval $[0,10]$. The latter approximation readily follows using Eq~\eqref{eq:bn01} and the moments defined by
\begin{align*}
	\mu_i=\int_{0}^{b} x^i e^{-\alpha x}\, dx=e^{-\alpha b} \sum_{j=0}^i (-1)^{i-j}\frac{i!}{j! (-\alpha)^{i-j+1}} b^j +  \frac{i!}{
		\alpha^{i+1}}
\end{align*}
with $\alpha=1$ and $b=10$. The Legendre-based BP approximation defined on $[0,10]$ is attained by placing the computed coefficients in Eq.~\eqref{eq:opbn01}. This approximation for $k=9$ is shown in Fig.~\ref{fig:Fig2}(a) (in the scale of the figure both approximations coincide with the true function). The Laguerre-based biorthogonal approximation produces the maximum absolute pointwise error, defined by $\max_{x\in [0,10]}|f(x) - f_{k}(x)|$, of $2.62 \times 10^{-4}$, while the Legendre-based biorthogonal approximation has the maximum absolute pointwise error to $f(x)$ of $2.20\times 10^{-4}$ even though involving fewer terms. The absolute pointwise errors of both approximations are shown in logarithmic scale in Fig.~\ref{fig:Fig2}(b).

We now use both polynomial types to approximate the Gamma distribution $f(x)=x \exp(-x)$, plotted in Fig.~\ref{fig:Fig2}(c). In this case, to obtain similar  Laguerre-based  and Legendre-based biorthogonal approximations we require $k= 17$ and $k=11$ respectively. The corresponding maximum absolute pointwise errors to $f(x)$ are $3.90\times 10^{-4}$ for the Laguerre-based and $8.52\times 10^{-4}$ for the Legendre-based approximation. The spatially-dependent absolute pointwise errors are shown in logarithmic scale of Fig.~\ref{fig:Fig2}(d).
\begin{figure}[ht!]
\begin{center}
	\includegraphics[width=\columnwidth]{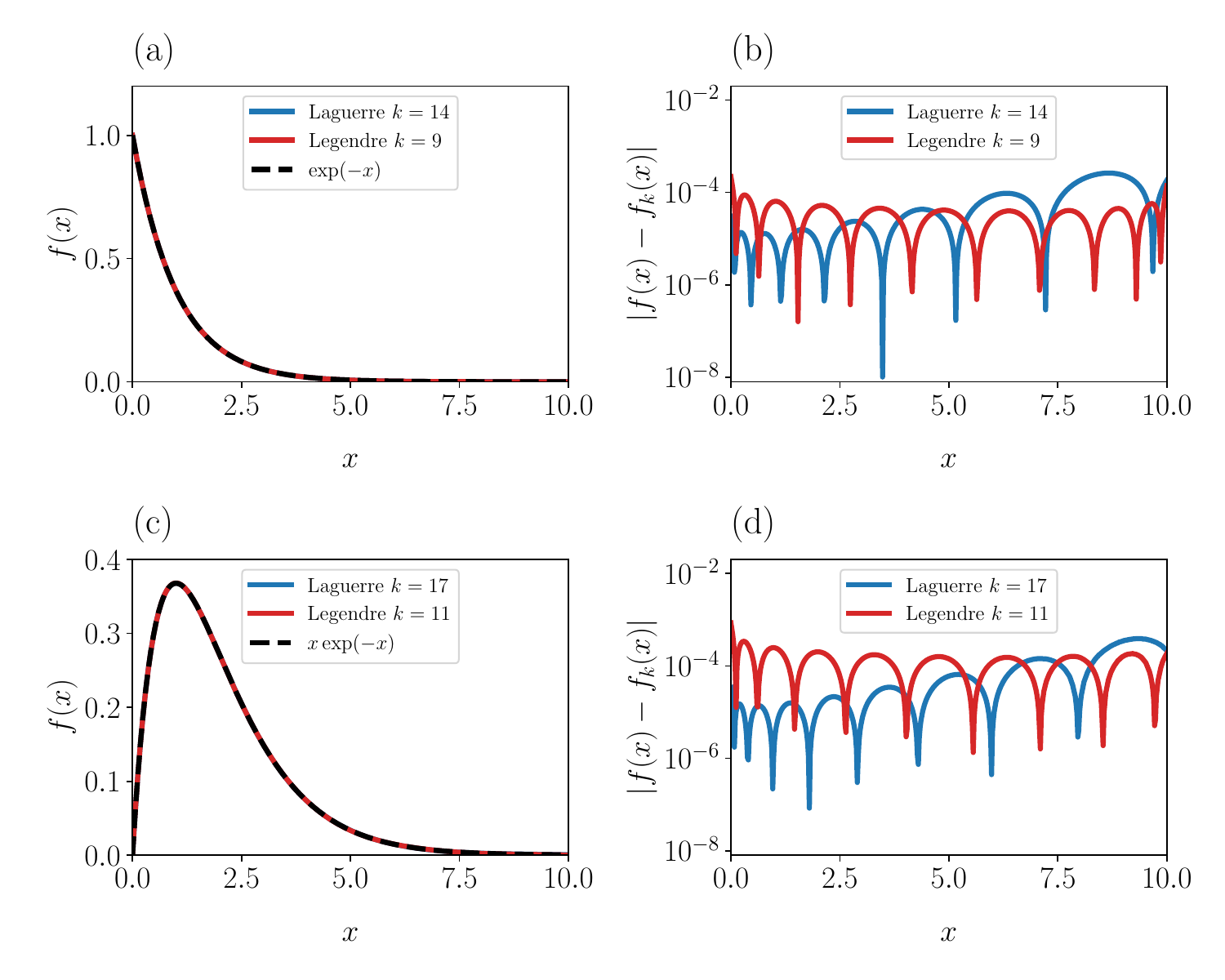}
\end{center}
	\caption{(a) The exponential decay function $f(x)=\exp(-\alpha x)$ with $\alpha=1$ is given by the black dashed curve defined on the interval $[0,10]$. Approximations using our methodology outlined in the main text using the Laguerre polynomials with $k=14$ (blue solid curve) and Legendre polynomials with $k=9$ (red solid curve) are presented (both approximations lie directly under each other). (b) Plot of the absolute pointwise errors of both approximations in logarithmic scale from figure (a). (c) The gamma distribution $f(x)=x\exp(-\alpha x)$ (black dashed curve) is approximated by Laguerre polynomials with $k=17$ (blue solid curve) and Legendre polynomials with $k=11$ (red solid curve), again with both approximations lying directly on top of each other. (d) Corresponding absolute pointwise errors of the approximations in logarithmic scale from (c).\label{fig:Fig2}}
\end{figure}

\subsection{Biorthogonal polynomials from Type B classic OPS}
We consider now families of BP which are generated by classical OPS that we refereed to as Type B (c.f.\eqref{eq:OPSB}). These are
\begin{itemize}
        \item[(i)] Legendre polynomials $\{P_n(x)\}_{n=0}^k$ corresponding to $\In=[-1,1]$ and $w(x)=1$.
	\item[(ii)] Chebyshev polynomials $\{T_n(x)\}_{n=0}^k$  corresponding to $\In=[-1,1]$ and $w(x)= 1/\sqrt{1-x^2}.$	
\end{itemize}

\subsubsection*{Biorthogonal polynomials from Legendre and Chebyshev polynomials on  $\In=[-1,1]$.}

(i) Legendre polynomials $\{P_n(x)\}_{n=0}^k$  on $\In=[-1,1]$ are generated as Ref.~\cite{Kre78}.

\begin{align*}
	P_j(x)=\frac{\sqrt{2j+1}}{2^j\sqrt{2}}\sum_{l=0}^{\lfloor j/2 \rfloor}(-1)^l \binom{j}{l} \binom{2j-2l}{j}x^{j-2l}.
\end{align*}

(ii) Chebyshev polynomials $\{T_n(x)\}_{n=0}^k$  on $\In=[-1,1]$ are generated as Ref.~\cite{Kre78}
\begin{align*}
T_0(x)&=\frac{1}{\sqrt{\pi}},\qquad
T_j(x)=\frac{\sqrt{2}}{\sqrt{\pi}}\frac{j}{2}\sum_{l=0}^{\lfloor j/2 \rfloor}
(-1)^l  2^{j-2l} \frac{(j-l-1)!}{l!(j-2l)!}x^{j-2l}\,\quad\text{for $j =1, 2, 3, \dots$}.
\end{align*}

Then, for  case (i)
\begin{align}\label{eq:P-11}
P_{2i+n}(x)=\frac{\sqrt{4i+2n+1}}{2^{2i+n}\sqrt{2}}\sum_{l=0}^{\lfloor (2i+n)/2 \rfloor}(-1)^l \binom{2i+n}{l} \binom{2(2i+n)-2l}{2i+n}x^{2i+n-2l}
\end{align}
and
\begin{align}\label{eq:a-11}
a_l^{2i+n}= (-1)^l\frac{\sqrt{4i+2n+1}}{2^{2i+n}\sqrt{2}} \binom{2i+n}{l} \binom{2(2i+n)-2l}{2i+n}.
\end{align}
The BP arise using Eq.~\eqref{eq:P-11} and Eq.~\eqref{eq:a-11} in the equation below
\begin{align}\label{eq:beta}
\beta_n^k(x)&= \sum_{i=0}^{\lfloor (k-n)/2 \rfloor} a_i^{2i+n} P_{2i+n}(x),\quad x \in [-1,1].
\end{align}

For case (ii)
\begin{align}\label{eq:T-11}
T_{0}&=\frac{1}{\sqrt{\pi}}, \quad
T_{2i+n}(x)=\frac{\sqrt{2}}{\sqrt{\pi}}\frac{2i+n}{2}\sum_{l=0}^{\lfloor (2i+n)/2 \rfloor}(-1)^l  2^{2i+n -2l} \frac{(2i+n-l-1)!}{l!(2i+n-2l)!}x^{2i+n-2l}
\end{align}
so that
\begin{align}\label{eq:ac-11}
{a'}_0^0&=\frac{1}{\sqrt{\pi}},\qquad
{a'}_l^{2i+n}= (-1)^l \frac{\sqrt{2}}{\sqrt{\pi}}\frac{2i+n}{2}2^{2i+n -2l} \frac{(2i+n-l-1)!}{l!(2i+n-2l)!} 
\end{align}
and the concomitant BP are obtained using in Eq.~\eqref{eq:beta} the polynomials~\eqref{eq:T-11} and coefficients \eqref{eq:ac-11}, instead of \eqref{eq:P-11} and \eqref{eq:a-11}.

\subsubsection*{Example 3: Sparse polynomial approximation}

In this example we illustrate the construction of sparse polynomial approximations. Let us consider for instance a given $f(x) \in {\cal{P}}_{19}$, where ${\cal{P}}_{19}$ indicates the set of polynomial of degree at most $19$ on $[-1,1]$. Thus,  $f(x)$ can be expressed as in (i) and (ii) below.
\begin{itemize}
 \item[(i)] A linear combination of Legendre polynomials
\begin{align*}
	g(x)=d_0 P_0 + d_1 P_1(x)  + d_2 P_2(x) + \cdots + d_{19} P_{19}(x),\quad x\in [-1,1],
\end{align*}
with $d_n= \left\langle P_n, f \right\rangle$, for $n=0, \dots, 19$.
\item[(ii)] A linear combination of monomials  
\begin{align*}
	h(x)=c_0^{19} x^0 + c_1^{19} x + c_2^{19} x^2 + \cdots + c_{19}^{19} x^{19},\quad x\in [-1,1],
\end{align*}
with $c_n^{19}= \left\langle \beta_n^{19}, f \right\rangle$, for  $n=0,\dots, 19$.
\end{itemize}
Both representations (i) and (ii) of $f(x),\, x\in [-1,1]$  are equivalent when all the coefficients in (i) and (ii) are included. However, when considering $K$-term approximations of $f(x)$, with at most $K$ non-zero coefficients, the results are only equivalent if the subspaces where the approximations lie are equal. This in general does not happen, because the approximating subspaces giving the best approximation in each case are in general different.

For case (i) the $K$-term approximation $g_K(x)$ of $f(x)$, which is optimal in the sense of minimising the norm of the residual error, is constructed simply by keeping the $K$ coefficients $d_{\ell_i}$, for $i=1,\ldots,K$ of the largest magnitude out of $|d_n|$ for $n=0,\dots, 19$. For case (ii) the search for the $K$-term approximation $h_K(x)$ leaving the residual  error  of minimal norm becomes computationally demanding, as there are $20!/(20! (20- K)!)$ possibilities to be considered.  Hence, for case (ii) we adopt an approximation $h_K(x)$ of $f(x)$ which is step-wise optimal. This is achieved by sequentially removing  terms from $h(x)$ as indicated in Remark~\ref{rem:boomp}.

In order to test both approaches we generate 50 polynomials
\begin{align*}
	f(x)= a_0 + a_1 x  + a_2 x^2 + \cdots + a_{19} x^{19},\quad x\in [-1,1],
\end{align*}
where the coefficients $a_n$, for $n=0,\dots,19$ are independent and randomly generated from a zero mean, unit variance, normal distribution. 

In the first instance to facilitate the visual differentiation of each case we construct the $K=6$ term approximation for case (i) and (ii).  The olive green dots in Fig.~\ref{fig:Fig3} correspond to the $L^2$-norm error by the approximation with respect to method (i), for each of the 50 randomly generated polynomials. The purple dots depict the $L^2$-norm error for method (ii). For visualisation purposes, the two errors for each realisation are connected by a straight vertical line for clear comparison, and colour coded to indicate which method is more accurate.

\begin{figure}[htp!]
\begin{center}
\includegraphics[width=0.75\columnwidth]{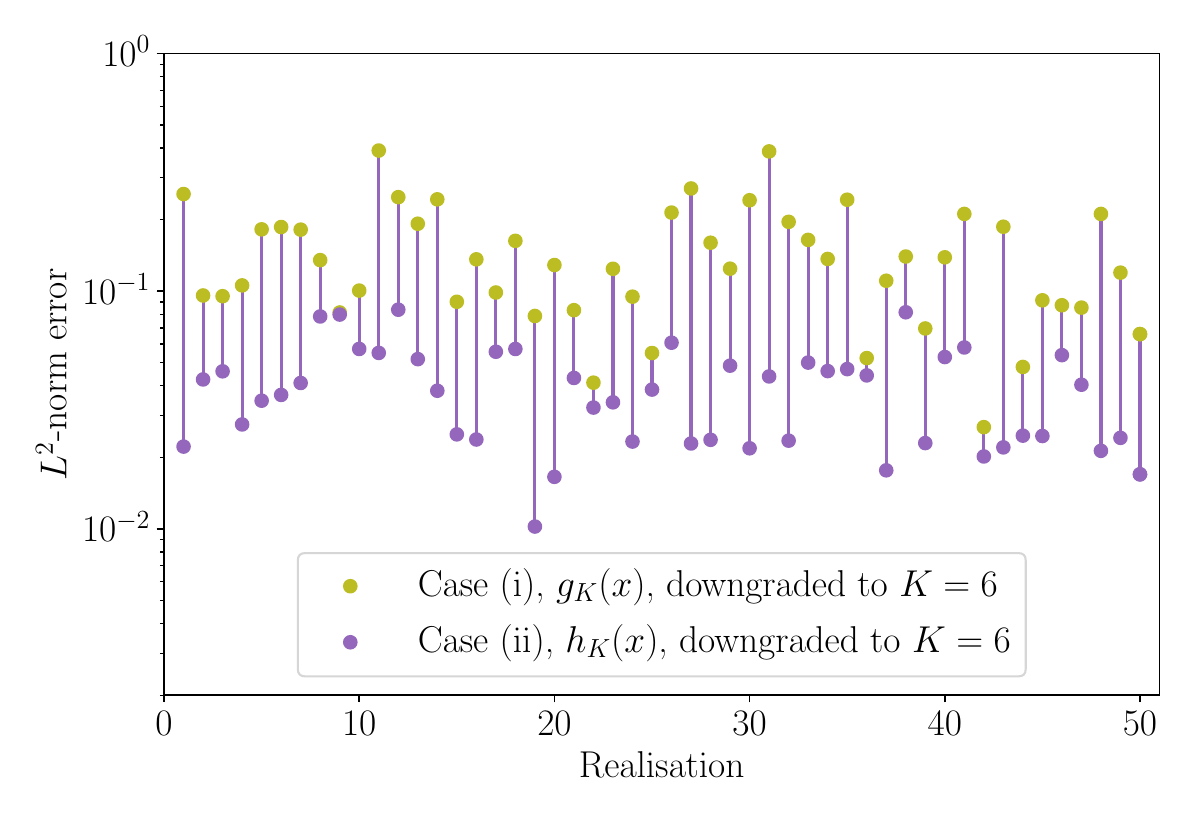}
\end{center}
\caption{The $L^2$-norm error norm, in logarithmic scale, for the downgraded polynomial approximation for each of the 50 random realisations of the polynomial $f(x)$. The olive green dots correspond to the $6$-term approximation constructed using the methodology of (i). The purple dots correspond to the $6$-term approximation for case (ii).}
\label{fig:Fig3}
\end{figure}

The mean $L^2$-norm error for case (i) is $1.47\times 10^{-1}$ with standard deviation $7.95\times 10^{-2}$ while the mean $L^2$-norm error for case (ii) is $3.46\times 10^{-2}$ with standard deviation $1.81\times 10^{-2}$.


The approximations to polynomial $f(x)$ for one realisation, using the two $K=6$-term approximations corresponding to cases (i) and (ii) are shown in Fig.~\ref{fig:Fig4}(a), with  Fig.~\ref{fig:Fig4}(b) showing the corresponding absolute pointwise errors in logarithmic scale.

\begin{figure}[htp!]
\begin{center}
\includegraphics[width=\columnwidth]{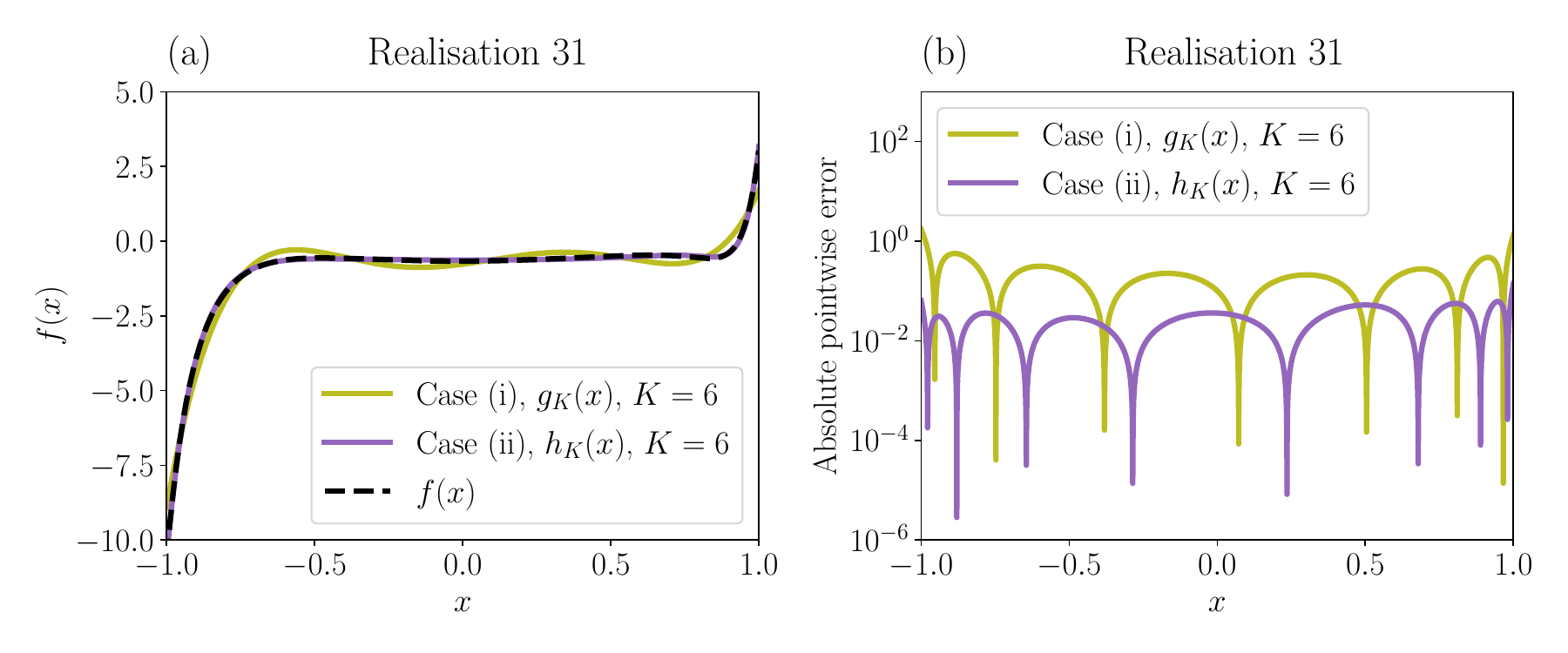}
\end{center}
\caption{Example of the approximations from realisation 31. (a) The approximation of a randomly generated polynomial $f(x)$ of degree at most 19 on the interval $[-1,1]$ is shown by the black dashed curve. The $K=6$-term approximation $g_K$ by Legendre orthogonal polynomials (case (i)) is represented by the olive green solid curve. The $K=6$-term approximation $h_K$ achieved by the biorthogonal polynomials (case (ii)) is represented by the purple solid curve indistinguishable from the black dashed curve for $f(x)$. (b) Absolute pointwise error of the two approximations to the randomly generated polynomial $f(x)$. The olive green curve is the error associated to $g_K$ (case (i)) and the purple curve is the error associated to $h_K$ (case (ii)).\label{fig:Fig4}}
\end{figure}

In order to demonstrate the difference in quality of the case (i) and case (ii) approximations in a range of smaller errors,  we repeat the downgrading test for $K=7,\ldots,19$ and compute the mean $L^2$-norm and mean maximum absolute pointwise error across all $50$ realisations for both cases. The results are depicted in Fig.~\ref{fig:Fig5}. We observe that for all the range of $K$ values case (ii) yields more accurate approximations.

\begin{figure}[htp!]
        \begin{center}
        \includegraphics[width=\columnwidth]{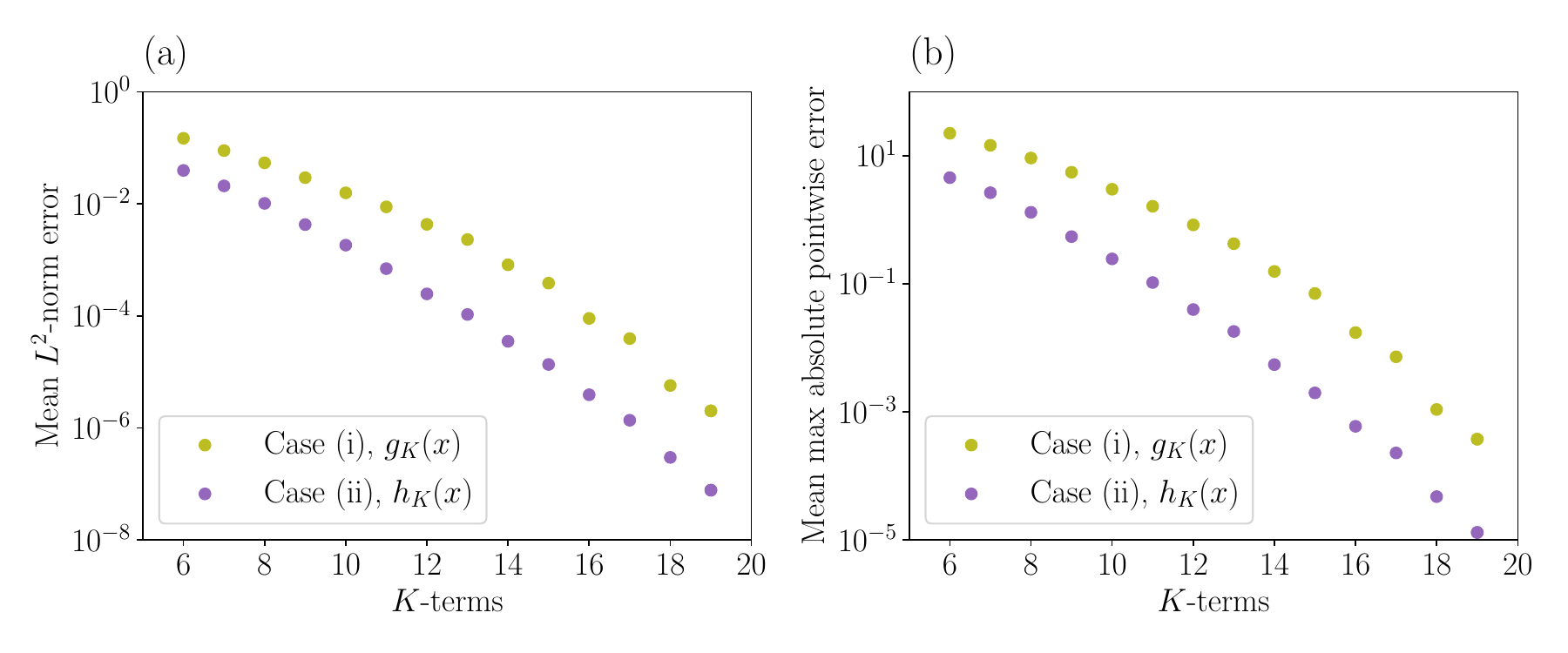}
        \end{center}
        \caption{The mean $L^2$-norm error norm (a) and mean maximum absolute pointwise error (b), in semi-logarithmic scale, averaged across all $50$ random realisations for the downgraded polynomial approximations computed via case (i) and case (ii) respectively. The mean errors are plotted versus the level of downgrading, i.e., the number of terms $K=6,\ldots,19$ in the approximations. We observe in all scenarios, case (ii) yields more accurate approximations.}
        \label{fig:Fig5}
        \end{figure}

It is clear from this numerical experiment that when the aim is to produce sparse approximation of polynomials the biorthogonal framework outlined in this work may result in more accurate approximations. This is because an orthogonal representation of polynomials is realised for mathematical and numerical convenience in posterior calculations. However, it is well known (see~\cite{RNA16,RNC19,CRN21} for some examples) that in general, releasing the condition of orthogonality in producing $K$-term approximations may introduce significant gains in terms of sparsity. 

\subsubsection*{Example 4: Analytical polynomial approximations using Legendre and Chebyshev polynomials (Type B) with comparison to matrix inversion methods}

This example applies the Legendre and Chebyshev based classic BP to compute the coefficients in the polynomial approximating of the continuous function:
\begin{align}\label{eq:chi2}
f(x)= (1-x^2)e^{-x}\sin(8 \pi x), \quad \text{for $x\in[-1,1]$}.
\end{align}
Approximating this function up to an $L^2$-norm error of $10^{-4}$, using either Legendre or Chebyshev polynomials, requires to consider a polynomial approximation of degree $k=36$. The function and the approximations are represented in Fig.~\ref{fig:Fig6}(a). Figure~\ref{fig:Fig6}(b) shows the absolute pointwise error  of the approximations in logarithmic scale. 

\begin{figure}[htp!]
\begin{center}
\includegraphics[width=\columnwidth]{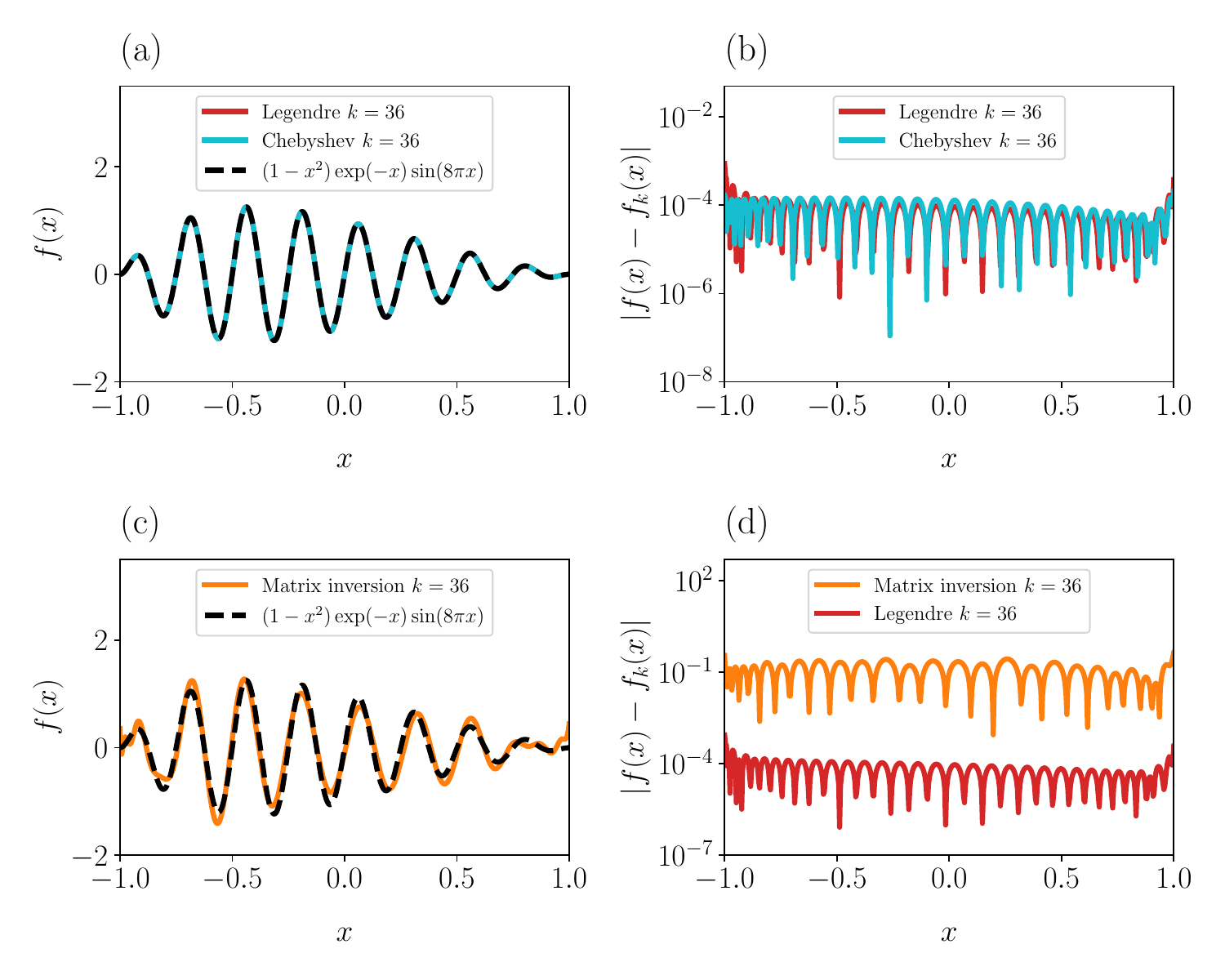}
\caption{(a) Approximations using our methodology outlined in the main text using the Legendre polynomials with $k=36$ (red solid curve) and Chebyshev polynomials with $k=36$ (cyan solid curve) are presented (both approximations lie directly under each other) to the function $f(x)=(1-x^2)\exp(- x)\sin(8\pi x)$ (Eq.~\eqref{eq:chi2}) given by the black dashed curve on the interval $[-1,1]$. (b) Plot of the absolute pointwise error of the approximations in logarithmic scale from (a). (c) Plot of the approximation of the function~\eqref{eq:chi2} by inverting the Gram matrix using a polynomial of degree $k=36$ (orange solid curve). (d) The absolute pointwise error arising from the polynomial produced by inverting the Gram matrix (solid orange curve) compared to the approximation given in (a) by applying our methodology using Legendre polynomials with $k=36$ (solid red line) which is three orders of magnitude smaller on average.\label{fig:Fig6}}
\end{center}
\end{figure}

\begin{remark}
It is worth commenting that the polynomial approximation of the functions \eqref{eq:chi2} can not be realised by the standard way of solving the least squares problem. Indeed, the standard method would  try to solve the normal equations: 
\begin{align*}
	\left\langle f , x^j \right\rangle= \sum_{n=0}^k c_n^k \left\langle x^n , x^j\right\rangle ,\quad \text{for $j=0, \dots,k$},
\end{align*}
which introduces the need to invert the Gram matrix of elements 
	\begin{align*}
		G(n,j)=  \left\langle x^n , x^j\right\rangle = \frac{1- (-1)^{n+j+1}}{n+j+1},\quad \text{for $n=0, \dots, k$ and $j=0, \dots, k$}.
	\end{align*}
This is an Hankel-type matrix which is very badly conditioned even for small values of $k$~\cite{ZT01}. In particular for $k=36$ the determinant if close to zero and the condition number is $10^{18}$. Figs.~\ref{fig:Fig6}(c) and (d) show the functional approximation and absolute pointwise error when computing using the matrix inversion method respectively. Comparison to the error produced using our biorthogonal polynomials via Legendre polynomials is presented in Fig.~\ref{fig:Fig6}(d).

It is worth noticing that orthogonal polynomials can also be used as a pre-conditioner to decrease the condition number of the matrix to be inverted.  Indeed, using Theorems 1 and 2 and letting $Q_k,\,k=1,\ldots,36$  be either Legendre or  Chebyshev polynomials to represent the orthogonal projector, the least squares solution to the approximations by a polynomial of degree $36$ can be expressed as 
\begin{align*}
\op_{V_{36}} f(x) &= \op_{V_{36}} \sum_{n=0}^{36} c_n x^n   = \sum_{k=0}^{36} \sum_{n=0}^{36} c_n 
	 Q_k(x)  \left \langle Q_k, x^n\right\rangle,
\end{align*}
or equivalently, after taking an inner product on both sides with 
each $Q_k$,
\begin{align*}
\left \langle Q_k , f \right\rangle= \sum_{n=0}^{36} 
	\left  \langle Q_k, x^n \right \rangle  c_n, \quad \text{for $k=0,\dots, 36$}.
\end{align*}	
From~\eqref{eq:ipxp} we gather that the new matrix to be inverted, of elements $\left \langle Q_k, x^n\right\rangle$, is lower triangular. Its inverse is more stable than that of the Gram matrix and produces equivalent results to those achieved by the proposed biorthogonalisation.
	


\end{remark}

%

\section{Conclusions}\label{sec:conclusions}

A  recursive procedure for constructing dedicated sets of polynomials $\{\beta_n^k\}_{n=0}^k,\, k \in \N$ in inner product spaces $V(\In, w)$ was proposed. Each set fulfils the following properties:
\begin{itemize}
	\item [(i)] It is biorthogonal to the basis of monomials $\{x^n\}_{n=0}^k$ in $V(\In, w)$. 
\item [(ii)] Span the same subspace  $V_k$ as $\{x^n\}_{n=0}^k,\, k \in \N$. 
\end{itemize}
These two properties amount to a representation of the orthogonal projection operation onto $V_k$, of an element $f\in  V(\In, w)$,  as given by
\begin{align*}
	\op_{V_k} f = \sum_{n=0}^k x^n \left\langle f, \beta_n^k \right\rangle.
\end{align*}
Since the  proposed construction is based on classic orthogonal polynomials there is no matrix inversion involved in the process. This feature is particularly convenient to decide the degree of a polynomial model by adjusting the residual error. Additionally, it is straightforward to realise a posterior reduction of the model by the removal of any monomial from the basis.

The procedure was illustrated by deriving biorthogonal polynomials based on  Legendre, Laguerre and Chebyshev polynomials. The consideration of other orthogonal polynomials, such as Hermite polynomials, would readily follow.

The convenience of the proposal was highlighted by four numerical examples: (i) Polynomial regression modelling of noisy data. (ii) Derivation of analytic polynomial forms for the exponential decay function. (iii) Sparse polynomial approximation. (iv) The approximation of a continuous function which requires a polynomial of degree at least $k=36$ to be well represented. Such an example, which cannot not be numerically realised by standard techniques involving the inversion of a Hankel-type matrix, gives an inside of the many applications that could benefit from the proposed construction of biorthogonal polynomials.

\section*{Acknowledgement}
The authors thank Dr Juan Neirotti for fruitful discussions on this work. JL was supported by the European Union’s Horizon 2020 research and innovation program under the Marie Skłodowska-Curie Grant Agreement No. 823937 for the RISE project HALT and the Leverhulme Trust Project Grant No. RPG-2021-014.

\appendix

\section{Proof of Theorem~\ref{thm:orthp}}
\begin{proof}
Recall that the optimal approximation that minimises the $L^2$-norm in $V_k$ for any $f \in V$ satisfies
\begin{align*}
        \|f - f^\ast\|_2= \min_{f_k \in V_k}\| f - f_k\|_2.
\end{align*}
For any $f_k \in  V_k$  we can write that $f = f + \op_{V_k} f - \op_{V_k} f$  to express $\| f - f_k\|_2^2$ in a convenient form
\begin{align*}
\| f - f_k \|_2^2 &= \left\langle f - f_k, f - f_k \right\rangle= \left\langle f - f_k + \op_{V_k} f -  \op_{V_k} f, f - f_k + \op_{V_k} f -  \op_{V_k} f \right\rangle.
\end{align*}
Since $\left\langle f_k, f - \op_{V_k} f \right\rangle = \left\langle f_k, \op_{V_k^\perp}f \right\rangle =0$ we have
\begin{align*}
\|f - f_k \|_2^2 = \left\langle f - \op_{V_kf},
 f - \op_{V_k}f \right\rangle+ \left\langle \op_{V_k} f- f_k,   \op_{V_k} f - f_k \right\rangle .
\end{align*}
i.e.
\begin{align*}
\| f - f_k \|_2^2= \| f - \op_{V_k} f\|_2^2 + \| \op_{V_k} f- f_k \|_2^2.
\end{align*}
Hence, $\| f - f_k \|_2$ is minimised when $f^\ast=f_k=\op_{V_k} f$.
\end{proof}
\section{Proof of Theorem~\ref{thm:oop}}
\begin{proof}
We need to show that $\op_{V_k}$ as defined in Eq.~\eqref{eq:opob} fulfils
\begin{itemize}
        \item [(i)] $\op_{V_k}^2 = \op_{V_k}$.
        \item [(ii)]$\op_{V_k}f_k = f_k$ for all $f_k \in V_k$  and  $\op_{V_k}f_k^\perp=0$ for all $f_k^\perp \in V_k^\perp$.
\end{itemize}
Since $\left\langle p_j, p_i \right\rangle=\delta_{j,i}$ it follows from Eq.~\eqref{eq:opob}, that
\begin{align*}
        \op_{V_k}^2 = \op_{V_k}\op_{V_k}=\sum_{j=0}^k \sum_{i=0}^k p_i \left\langle p_j , p_i\right\rangle \left\langle \cdot, p_j \right\rangle= \sum_{j=0}^k p_j  \left\langle \cdot, p_j\right\rangle = \op_{V_k},
\end{align*}
which proves (i).

For (ii) we use $\{p_n\}_{n=0}^k$ to  write an arbitrary vector in $V_k$ as the linear combination $\displaystyle{f_k= \sum_{n=0}^k s_n p_n}$, for some real coefficients $s_n$ for $n=0,\dots,k$. Then
\begin{align*}
        \op_{V_k}f_k =  \sum_{n=0}^k \sum_{i=0}^n p_i \left\langle s_n p_n, p_i\right\rangle=\sum_{n=0}^k \sum_{i=0}^n s_np_i\delta_{n,i} =\sum_{n=0}^k  s_n p_n= f_k.
\end{align*}

Since, $f_k^\perp \in V_k^\perp$ we have that $\left\langle f_k^\perp, p_n \right\rangle=0$ for $n=0,\dots,k$, and it follows that $\sum_{n=0}^k p_n \left\langle f_k^\perp , p_n\right\rangle =0$. Subsequently, we conclude that for $f \in V(\In, w)$ the operation $\op_{V_k}f=\sum_{n=0}^k p_n \left\langle f, p_n \right\rangle$ is the orthogonal projection of $f$ onto $V_k(\In, w)$.
\end{proof}

\section{Proof of Property~\ref{prop:appenC}}
This proof follows the steps required to prove Theorem 3.1 in Ref.~\cite{LRN02}. Assuming properties (i) and (ii) stated in Property~\ref{prop:appenC}, we will now present proofs of properties (iii) and (iv).

The proof of (iii) in Property~\ref{prop:appenC} is a direct consequence of the assumed biorthogonality property $\left\langle x^m, \beta_n^k\right\rangle= \delta_{n,m}$ for $m=0,\dots,k$ and $n=0,\dots,k$.

To prove statement (iv) in Property~\ref{prop:appenC}, we use
\begin{align}\label{eq:duba}
\beta_n^{k\backslash\ell_j}(x)= \beta_n^{k}(x) - \beta_{\ell_j}^{k}(x)\frac{\left\langle \beta_{\ell_j}^k, \beta_n^{k}\right\rangle}{\left\langle \beta_{\ell_j}^{k}, \beta_{\ell_j}^{k} \right\rangle}, \quad n=0,\ldots,\ell_{j-1},\ell_{j+1},\ldots,k, \quad x \in \In,
\end{align}
to write
\begin{align}\label{eq:pvnje}
\op_{V_{k\backslash\ell_j}}= \sumnj x^n \left\langle \cdot, \beta_n^{k\backslash\ell_j} \right\rangle= \sumnj x^n  \left\langle \cdot,  \beta_n^k \right\rangle - \sumnj   x^{n} \left\langle \cdot, \beta_{\ell_j}^k\right\rangle\frac{\left\langle \beta_{\ell_j}^k, \beta_n^{k}\right\rangle}{\left\langle \beta_{\ell_j}^{k}, \beta_{\ell_j}^{k} \right\rangle}.
\end{align}
This will enable us to prove the following statements:
\begin{itemize}
	\item[(a)] $\op_{V_{k\backslash\ell_j}} g = g$ for all $g \in V_{k\backslash\ell_j}$.
	\item[(b)] $\op_{V_{k\backslash\ell_j}} g = 0$ for all $g \in V_{k\backslash\ell_j}^\perp$, with $V_{k\backslash\ell_j}^\perp$ denoting the orthogonal complement of $V_{k\backslash\ell_j}$ in $V_k$. 
\end{itemize}
Since every $g \in V_{k\backslash\ell_j}$ can be expressed as a linear combination $g = \displaystyle \sumnj a_n x^n$ for some coefficients $a_n$ for $n=0,\dots, j-1, j+1, \dots, N$, then  using  $\left\langle x^l, \beta_n^k\right\rangle= \delta_{n,l}$ for $l=0,\dots,k$, and $n=0,\dots, k$ we have:
\begin{align*}
	 \op_{V_{k\backslash\ell_j}}= \sumnj x^n \sumnl a_l \left\langle  x^l , \beta_n^k \right\rangle+0=\sumnj x^n  a_n= g,
\end{align*}
which proves (a). Moreover, notice that since 
\begin{align*}
	\sumnj x^n \left\langle \cdot,  \beta_n^k \right\rangle = \op_{V_k}- x^{\ell_j} \left\langle  \cdot,  \beta_{\ell_j}^k \right\rangle ,
\end{align*}
Eq.~\eqref{eq:pvnje} can be re-written as
\begin{align}\label{eq:pvnje2}
\op_{V_{k\backslash\ell_j}}&= \op_{V_k} - x^{\ell_j} \left\langle  \cdot,  \beta_{\ell_j}^k \right\rangle- \frac{\op_{V_k} \beta_{\ell_j}^k \left\langle  \cdot,  \beta_{\ell_j}^k \right\rangle}{
\|\beta_{\ell_j}^k\|^2_2} + {x^{\ell_j} \left\langle  \cdot  , \beta_{\ell_j}^k \right\rangle}\nonumber\\
&=  \op_{V_k} -  \frac{\op_{V_k} \beta_{\ell_j}^k \left\langle  \cdot,  \beta_{\ell_j}^k \right\rangle}{\|\beta_{\ell_j}^k\|^2_2} =  \op_{V_k} - \frac{\beta_{\ell_j}^k \left\langle  \cdot,  \beta_{\ell_j}^k \right\rangle}{\|\beta_{\ell_j}^k\|^2_2}.
\end{align}
Since $V_{k\backslash\ell_j}^\perp$ is spanned by the single element $\beta_{\ell_j}^k \in V_k$ for all $g \in V_{k\backslash\ell_j}^\perp = \alpha \beta_{\ell_j}^k,$ for some constant $\alpha  \in \R$. Then 
\begin{align*}
	\op_{V_{k\backslash\ell_j}}  \alpha \beta_{\ell_j}^k=\alpha  \beta_{\ell_j}^k  -  \alpha  \beta_{\ell_j}^k=0,
\end{align*}
which demonstrates (b) and completes the proof of Property~\ref{prop:appenC}.


\begin{thebibliography}{0}%
\makeatletter
\providecommand \@ifxundefined [1]{%
 \@ifx{#1\undefined}
}%
\providecommand \@ifnum [1]{%
 \ifnum #1\expandafter \@firstoftwo
 \else \expandafter \@secondoftwo
 \fi
}%
\providecommand \@ifx [1]{%
 \ifx #1\expandafter \@firstoftwo
 \else \expandafter \@secondoftwo
 \fi
}%
\providecommand \natexlab [1]{#1}%
\providecommand \enquote  [1]{``#1''}%
\providecommand \bibnamefont  [1]{#1}%
\providecommand \bibfnamefont [1]{#1}%
\providecommand \citenamefont [1]{#1}%
\providecommand \href@noop [0]{\@secondoftwo}%
\providecommand \href [0]{\begingroup \@sanitize@url \@href}%
\providecommand \@href[1]{\@@startlink{#1}\@@href}%
\providecommand \@@href[1]{\endgroup#1\@@endlink}%
\providecommand \@sanitize@url [0]{\catcode `\\12\catcode `\$12\catcode `\&12\catcode `\#12\catcode `\^12\catcode `\_12\catcode `\%12\relax}%
\providecommand \@@startlink[1]{}%
\providecommand \@@endlink[0]{}%
\providecommand \url  [0]{\begingroup\@sanitize@url \@url }%
\providecommand \@url [1]{\endgroup\@href {#1}{\urlprefix }}%
\providecommand \urlprefix  [0]{URL }%
\providecommand \Eprint [0]{\href }%
\providecommand \doibase [0]{https://doi.org/}%
\providecommand \selectlanguage [0]{\@gobble}%
\providecommand \bibinfo  [0]{\@secondoftwo}%
\providecommand \bibfield  [0]{\@secondoftwo}%
\providecommand \translation [1]{[#1]}%
\providecommand \BibitemOpen [0]{}%
\providecommand \bibitemStop [0]{}%
\providecommand \bibitemNoStop [0]{.\EOS\space}%
\providecommand \EOS [0]{\spacefactor3000\relax}%
\providecommand \BibitemShut  [1]{\csname bibitem#1\endcsname}%
\let\auto@bib@innerbib\@empty
\end{thebibliography}%


\begin{thebibliography}{00}



\bibitem{Ost12} E. Ostertagová, ``Modelling using Polynomial Regression'', Procedia Engineering, {\bf{48}}, 500--506 (2012).

\bibitem{EO20} M. Ekum and A. Ogunsanya,
``Application of hierarchical polynomial regression models to predict transmission of COVID-19 at global level'',
		Int J Clin Biostat Biom, {\bf{6}} (2020) 
		DOI: 10.23937/2469-5831/1510027.

\bibitem{AVV22} M. Ajona, P. Vasanthi and D.S. Vijayan,
        ``Application of multiple linear and polynomial regression in the sustainable biodegradation process of crude oil'',
Sustainable Energy Technologies and Assessments,
{\bf{54}}, (2022), 102797.


\bibitem{MMR17} J. Munkhammar, L. Mattsson and J. Rydén, 
``Polynomial probability distribution estimation using the method of moments'', PLOS One 2017, https://doi.org/10.1371/journal.pone.0174573.

\bibitem{Im} G. Finlayson, M. Mackiewicz, A. Hurlbert,
        ``Color Correction Using Root-Polynomial Regression''
        IEEE Transactions on Image Processing,  {\bf{24}},
                1460--1470 (2015).


\bibitem{SHL23} M. Shi, W. Hu, M. Li, J. Zhang, X. Song and
        W. Sun, ``Ensemble regression based on polynomial regression-based decision tree and its application in the in-situ data of tunnel boring machine'', Mechanical Systems and Signal Processing,
 {\bf{188}} (2023) 110022



\bibitem{SS22} J. Singh and  S. Singh, ``A review on Machine learning aspect in physics and mechanics of glasses'', Materials Science and Engineering: B, {\bf{284}} (2022)  115858.

\bibitem{MBD22} G. G. Chrysos, S. Moschoglou, G. Bouritsas, J. Deng, Y. Panagakis and S. Zafeiriou,
``Deep Polynomial Neural Networks'', 
IEEE Transactions on Pattern Analysis and Machine Intelligence
{\bf{44}}, 4021--4034 (2022).

\bibitem{ZT01} N. L. Zamarashkin and E. E. Tyrtyshnikov, 
``Eigenvalue estimates for Hankel matrices'', 
		Sbornik: Mathematics {\bf{192}} 537--550 (2001).

		
\bibitem{Sze82} C. Szegi\"o,
Collected papers, Birkha\"{a}user, Boston, 1982.

\bibitem{IN88} A. Iserles and S. P. Nørsett, ``On the theory of bi-orthogonal polynomials'', Trans. Amer. Math. Soc. {\bf{306}},
455--474 (1988).

\bibitem{IN89}
        A. Iserles and S. P. Nørsett, Christoffel-Darboux-type formulae and a recurrence for bi-orthogonal polynomials. J. Constr. Approx. {\bf{5}} 437--453 (1989).

\bibitem{IN95}
        A. Iserles and S. P. Nørsett, Explicit representations of biorthogonal polynomials, Numer. Algor. {\bf{10}}, 51--68 (1995).

\bibitem{Bre92} C. Brezinski, Biorthogonality and its Applications to Numerical Analysis, M. Dekker, New York, 1992.

\bibitem{RNL02}  L.~Rebollo-Neira and  D.~Lowe,
``Optimised orthogonal matching pursuit approach'',
IEEE Signal Process. Letters, {\bf{9}},
137--140 (2002).

\bibitem{LRN02} L.~Rebollo-Neira, ``Backward adaptive biorthogonalisation'',  Journal of Mathematics and Mathematical Sciences
(2002).

\bibitem{LRN07} L.~Rebollo-Neira,
	``Constructive updating/downdating of oblique projectors: a generalisation of the Gram-Schmidt process'', J. Phys. A: Math. Theor. {\bf{40}}, 6381--6394 (2007). 

\bibitem{Kre78} E. Kreyszig, Introductory Functional Analysis with Applications, John Wiley \& Sons, 1978. 

\bibitem{Pri81} M. P. Priestley,  Spectral Analysis and Time Series. Academic Press, 1981, ISBN 978-0-12-56492-3. 

\bibitem{RNA16} L. Rebollo-Neira,  G. Aggarwal, ``A dedicated greedy pursuit algorithm for sparse spectral representation of music sound",
Journal of the Acoustic Society of America,
{\bf{140}}, 2933 (2016).
http:$//$dx.doi.org$/$10.1121$/$1.496434.

\bibitem{RNC19} L. Rebollo-Neira and D. \v{C}ern{\'a}, ``Wavelet based dictionaries for dimensionality reduction of ECG signals'', {{Biomedical Signal Processing and Control}}, {\bf{54}} (2019), article No. 101593.
https://doi.org/10.1016/j.bspc.2019.101593.

\bibitem{CRN21}  D. \v{C}ern{\'a} and L. Rebollo-Neira,
``Construction of wavelet dictionaries for ECG modeling''
MethodsX, {\bf{8}}, 101314 (2021).

\end{thebibliography}



\end{document}